\long\def\symbolfootnote[#1]#2{\begingroup 
\def\thefootnote{\fnsymbol{footnote}}\footnote[#1]{#2}\endgroup}
\numberwithin{equation}{section}
\newtheorem*{theorem*}{Theorem}
\newtheorem{theorem}{Theorem}[section]
\newtheorem{lem}[theorem]{Lemma}
\newtheorem{thm}[theorem]{Theorem}
\newtheorem{prop}[theorem]{Proposition}
\newtheorem{cor}[theorem]{Corollary}
\theoremstyle{definition}
\newtheorem{remark}[theorem]{Remark}
\newtheorem{dfn}[theorem]{Definition}
\newtheorem{ex}[theorem]{Example}
\newtheorem{quest}[theorem]{Question}
\newtheorem{conj}[theorem]{Conjecture}
\newtheorem{notation}[theorem]{Notation}
\newtheoremstyle{maintheorem}{3mm}{3mm}{\itshape}{}{\bfseries}{}{.5em}{#1 \!\thmnote{#3}.}
\theoremstyle{maintheorem}
\newtheorem*{mainthm}{Theorem}
\newtheorem*{mainprop}{Proposition}
\def\thm@space@setup{%
	\thm@preskip=4mm \thm@postskip=4mm
}
\newcommand{\N}{\mathcal{N}}
\newcommand{\Q}{\mathbb{Q}}
\newcommand{\R}{\mathbb{R}}
\newcommand{\RR}{\mathcal{R}}
\newcommand{\Z}{\mathbb{Z}}
\newcommand{\C}{\mathbb{C}}
\newcommand{\PPP}{\mathfrak{P}}
\newcommand{\U}{\mathcal{U}}
\renewcommand{\P}{\mathcal{P}}
\newcommand{\CC}{\mathcal{C}}
\newcommand{\PP}{\mathbb{P}}
\newcommand{\Wh}{\operatorname{Wh}}
\newcommand{\Hom}{\operatorname{Hom}}
\newcommand{\Map}{\operatorname{Map}}
\newcommand{\hull}{\operatorname{hull}}
\newcommand{\ab}{{\operatorname{ab}}}
\newcommand{\pr}{{\operatorname{pr}}}
\newcommand{\ev}{{\operatorname{ev}}}
\newcommand{\odd}{{\operatorname{odd}}}
\renewcommand{\bar}{\overline}
\newcommand{\norm}{\mathfrak{N}}
\newcommand{\im}{\operatorname{im}}
\renewcommand{\subset}{\subseteq}
\def\id{\mathrm{id}}
\def\s-{\smallsetminus}
\def\D{\mathcal{D}}
\newcommand{\tolabel}[1]{\xlongrightarrow{#1}}
\renewcommand{\det}{\operatorname{det}}
\newcommand{\tor}{\rho^{(2)}}
\newcommand{\betti}{b^{(2)}}
\newcommand{\ct}{\chi^{(2)}}
\renewcommand{\phi}{\varphi}
\def\s-{\smallsetminus}
\newcommand{\zg}{{\Z G}}
\newcounter{flocomments}
\begin{document}
	
	\title{The $L^2$-torsion polytope of amenable groups}
	\author{Florian Funke}
	\address{Mathematisches Institut der Universit\"{a}t Bonn, Endenicher Allee 60, 53115 Bonn, Germany}
	\email{ffunke@math.uni-bonn.de}
	
	\begin{abstract}
		We introduce the notion of groups of polytope class and show that torsion-free amenable groups satisfying the Atiyah Conjecture possess this property. A direct consequence is the homotopy invariance of the $L^2$-torsion polytope among $G$-CW-complexes for these groups. As another application we prove that the $L^2$-torsion polytope of an  amenable group vanishes provided that it contains a non-abelian elementary amenable normal subgroup.
	\end{abstract}
	
	\maketitle

\section{Introduction}

In \cite{FriedlLueck2015, FriedlLueck2015b} Friedl-Lück construct a new geometric invariant $P(X;G)$ called \emph{$L^2$-torsion polytope} for a $G$-CW-complex $X$ (satisfying a number of assumptions, see \cref{sub:pol}), which shares many features with the $L^2$-torsion $\tor(X;\N(G))$. It takes values in an \emph{integral polytope group} $\P_T(H_1(G)_f)$, which is defined as the Grothendieck group of integral polytopes in $H_1(G)_f\otimes_\Z\R$ up to translation. Here $H_1(G)_f$ denotes the free part of the first integral homology $H_1(G)$ of $G$. One of the main results of Friedl-Lück's theory states that if $X = \widetilde{M}$ is the universal cover of a $3$-manifold $M$ (satisfying a number of conditions), then $P(\widetilde{M};\pi_1(M))$ is the dual of the unit ball of the Thurston norm, see \cite[Theorem 3.35]{FriedlLueck2015b}.

The $L^2$-torsion polytope has the potential to be a powerful geometric invariant on groups. Namely, if $G$ is an $L^2$-acyclic group of type $F$ which satisfies the Atiyah Conjecture and has vanishing Whitehead group, one can define the $L^2$-torsion polytope of $G$ as 
\[P(G) = P(EG;G).\]
A forerunner version of the $L^2$-torsion polytope of groups was defined and examined by Friedl-Tillmann \cite{FriedlTillmann2015} in the special case where $G$ is a torsion-free group given by a presentation with two generators, one relation, and first Betti number $b_1(G) = 2$. They show that in this case $P(G)$ completely determines the BNS-invariant of Bieri-Neumann-Strebel \cite{Bierietal1987}. A similar result was obtained by Kielak and the author \cite[Corollary 6.4]{FunkeKielak2016} for some free-by-cyclic groups.

This paper is motivated by the following conjecture of Friedl-Lück-Tillmann \cite[Conjecture 6.4]{FriedlLueckTillmann2016}  about the $L^2$-torsion polytope of amenable groups. We mention that in the original formulation of the conjecture \emph{not virtually $\Z$} can be replaced with \emph{not isomorphic to $\Z$} since any torsion-free virtually $\Z$ group is in fact isomorphic to $\Z$.

\begin{conj}[Vanishing of the $L^2$-torsion polytope of amenable groups]\label{conj:polytope amenable}
	Let $G\neq \Z$ be an amenable group satisfying the Atiyah Conjecture. Suppose that $G$ is of type $F$ and that $\Wh(G) = 0$. Then we have for the $L^2$-torsion polytope  \[ P(G) = 0.\]
\end{conj}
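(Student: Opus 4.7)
The plan is to combine the main polytope-class theorem of this paper, which provides homotopy invariance for amenable $G$ satisfying the Atiyah conjecture, with the Li--Thom theorem asserting $\tor(H) = 0$ for every amenable group $H$ of type $F$. Because $G$ is of polytope class and $\Wh(G) = 0$, the polytope $P(G) \in \P_T(H_1(G)_f)$ is an honest invariant of $G$ alone, and the task reduces to showing that this class is trivial in the Grothendieck group.

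First I would reduce to a direction-by-direction vanishing. An element $[P] \in \P_T(H_1(G)_f)$ is trivial exactly when the translation-invariant semi-norm
\[
\phi \longmapsto \max_{p \in P}\phi(p) - \min_{p \in P}\phi(p)
\]
is zero for every $\phi \in \Hom(H_1(G)_f,\R)$; by continuity and density of rational rays, this can be checked on integral surjective characters $\phi\colon G \to \Z$. By results of Friedl--L\"uck, for such a $\phi$ this semi-norm coincides with the asymptotic degree of the twisted $L^2$-torsion function $t \mapsto \tor(G;\phi_t)$, so the problem becomes showing that this degree vanishes for every surjection $\phi\colon G \to \Z$.

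Next, given such a $\phi$ with kernel $K \trianglelefteq G$, one has a short exact sequence $1 \to K \to G \to \Z \to 1$ with $K$ amenable. Under the provisional assumption that $K$ is of type $F$, Li--Thom gives $\tor(K;\N(K)) = 0$, and L\"uck's multiplicativity of $L^2$-torsion along fibrations, combined with its twisted refinements, forces the degree of $t \mapsto \tor(G;\phi_t)$ to vanish. Running this argument over every $\phi$ yields $P(G) = 0$; the excluded case $G = \Z$ is genuinely exceptional, as there $P(\Z)$ picks up a non-trivial canonical interval from the determinant of $t-1$.

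The main obstacle is that $K$ need not be of type $F$ even when $G$ is of type $F$ and amenable: only the $\phi$ lying in a fibered face of the Bieri--Neumann--Strebel invariant have finitely generated kernel, let alone one of type $F$. To cover the remaining $\phi$ one would have to exhaust $K$ equivariantly by finite subcomplexes and pass to a limit, controlling the contributions to the twisted Fuglede--Kadison determinants $\det_{\N(G)}(A - t\cdot\id)$ uniformly in the deformation parameter $t$. Promoting the Li--Thom determinant-approximation to a uniform statement at the level of polytopes, rather than for a single Fuglede--Kadison determinant at a fixed $t$, is where I expect the genuine difficulty to lie; this appears to be the reason \cref{conj:polytope amenable} remains open in full generality, and why the paper at hand settles it only under the additional hypothesis of a non-abelian elementary amenable normal subgroup, where one can bypass approximation through Wegner-style $\det \geq 1$ arguments.
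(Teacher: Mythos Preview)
The statement is a conjecture that the paper does \emph{not} prove in full; it obtains only the partial results \cref{thm:vanishing polytope} and \cref{prop: amenable}. So there is no proof in the paper to compare against, but your proposal contains a genuine error worth isolating.

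Your key reduction---``an element of $\P_T(H_1(G)_f)$ is trivial exactly when the associated seminorm vanishes for every $\phi$''---is false once $\rk H_1(G)_f \geq 2$. The seminorm homomorphism $\norm\colon \P_T(H)\to \Map(\Hom(H,\R),\R)$ has kernel equal to $\im(\id-*)$ by \cref{little lemma}, and this is nontrivial: if $T\subset\Z^2$ is a triangle that is not centrally symmetric up to translation, then $T-*T\neq 0$ in $\P_T(\Z^2)$ yet $\norm(T-*T)=0$. You seem to be tacitly assuming that $P(G)$ is represented by a single polytope, for which vanishing seminorm would indeed force triviality; but polytope class only says that $\PP([r_A])$ is a polytope for an individual weak isomorphism $r_A$, whereas the universal $L^2$-torsion of a chain complex is a \emph{signed} combination of such classes, so $P(G)$ is a priori only a formal difference.

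Consequently the obstruction you locate in the last paragraph (that $K=\ker\phi$ need not be of type $F$) is not the one actually blocking the argument. The paper's \cref{prop: amenable} obtains $\norm(P(G))(\phi)=-\ct(EK;\N(K))=0$ for every surjective $\phi$ using only that $K$ is infinite amenable and hence $L^2$-acyclic; no finiteness hypothesis on $K$ and no Li--Thom input are required. What remains open, and what your outline does not address, is precisely the passage from $\norm(P(G))=0$ to $P(G)=0$; the paper can extract only $P(G)=P-*P$ for some polytope $P$.
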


By means of the polytope homomorphism that is essential in the definition of the $L^2$-torsion polytope, we introduce the notion of groups \emph{of $P\geq 0$-class} and the even stronger property \emph{of polytope class}. These notions are polytope analogues of the notion of $\det\geq 1$-class about the Fuglede-Kadison determinant. Our first theorem shows that these definitions are meaningful.

\begin{mainthm}[{ \ref{thm:amenable polytope class}} \normalfont (Polytope class and amenability)]
	Let $G$ be a torsion-free amenable group satisfying the Atiyah Conjecture such that $H_1(G)_f$ is finitely generated. Then $G$ is of polytope class.
\end{mainthm}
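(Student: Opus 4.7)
The plan is to exploit the fact that for a torsion-free amenable group $G$ satisfying the Atiyah Conjecture, the group ring $\zg$ is an Ore domain, which allows matrix-level polytope statements to be reduced to scalar-level identities about Newton polytopes of single elements in $\zg$.

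First I would establish the ring-theoretic setup. The Atiyah Conjecture in the torsion-free case implies that $\zg$ is a domain, and by a classical theorem of Tamari the group ring of an amenable group satisfies the Ore condition. Its classical Ore localization is therefore a skew field, which by the Atiyah Conjecture must coincide with the Linnell skew field $\D(G)$. In particular every nonzero element of $\D(G)$ is of the form $pq^{-1}$ with $p,q \in \zg \setminus \{0\}$, and by collecting common right denominators any square matrix $A \in M_n(\zg)$ that is invertible over $\D(G)$ can be processed so that its Dieudonn\'e determinant $\Ddetc(A) \in \D(G)^\times_\ab$ is represented by such a scalar fraction.

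Next I would verify the Newton-polytope identity $P(xy) = P(x) + P(y)$ for nonzero $x,y \in \zg$, where $P(\cdot)$ denotes the convex hull in $H_1(G)_f \otimes_\Z \R$ of the image of the support under the projection $\pi\colon G \twoheadrightarrow H_1(G)_f$. The inclusion $P(xy) \subseteq P(x) + P(y)$ is immediate from $\supp(xy) \subseteq \supp(x)\cdot \supp(y)$. For the reverse inclusion one regards $\zg$ as the crossed product of $\Z[\ker(\pi)]$ by the free abelian group $H_1(G)_f$: at each vertex of the Minkowski sum $P(x)+P(y)$ the coefficient of $xy$ is, up to a unit in the twisted grading, the product of the leading $H_1(G)_f$-graded components of $x$ and $y$, which lie in the domain $\Z[\ker(\pi)]$ and are therefore nonzero. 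Combined with the scalar representation from the first step, this identifies the image of $\Ddetc(A)$ under the polytope homomorphism with the honest difference $[P(p)] - [P(q)] \in \P_T(H_1(G)_f)$ of Newton polytopes of actual elements of $\zg$.

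The final step is to upgrade this information to the full axioms of polytope class by showing that the required matrix-level positivity and invariance properties all reduce to the scalar Newton-polytope identity above, together with the invariance of $\Ddetc$ under elementary row and column operations over $\D(G)$. The main technical obstacle lies in this reduction: one needs to arrange the Ore manipulations compatibly with the polytope homomorphism so that the formal difference in $\P_T(H_1(G)_f)$ is genuinely witnessed by Newton polytopes of elements of $\zg$, not merely of $\D(G)$. Once this compatibility is in place, amenability via the Ore property supplies everything that is needed, and the hypothesis that $H_1(G)_f$ is finitely generated ensures that the polytopes live in a finite-dimensional ambient space where $\P_T(H_1(G)_f)$ is well-behaved.
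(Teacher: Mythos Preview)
Your argument has a genuine gap at the decisive point. You correctly observe that amenability makes $\zg$ an Ore domain with $\D(G)$ as its Ore skew field of fractions, and hence that the Dieudonn\'e determinant of any $A\in M_n(\zg)$ invertible over $\D(G)$ is represented by some $pq^{-1}$ with $p,q\in\zg\setminus\{0\}$. Together with the multiplicativity $P(xy)=P(x)+P(y)$ this does yield
\[
\PP\big([r_A]\big)=P(p)-P(q)\in\P_T(H_1(G)_f).
\]
But this is only a \emph{formal difference} of two polytopes, and the definition of polytope class demands that $\PP([r_A])$ lie in the submonoid $\PPP_T(H_1(G)_f)$, i.e.\ be represented by a \emph{single} polytope. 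Nothing in your outline explains why $P(p)-P(q)$ should collapse to a single polytope; multiplicativity of $P$ on $\zg$ gives no control over this, and for noncommutative $G$ there is no reason to expect the Dieudonn\'e determinant to be represented by a single element of $\zg$. Your final paragraph locates the ``main technical obstacle'' in arranging that $p,q$ lie in $\zg$ rather than $\D(G)$, but that step is the easy one; the real obstacle is the subsequent passage from a difference to a single polytope, which you do not address.

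The paper handles exactly this point, and it is the substance of the proof. For each epimorphism $\phi\colon H_1(G)_f\to\Z$ one uses the Euclidean algorithm in the twisted Laurent ring $\D(K)_t[u^{\pm}]$ (with $K=\ker(\phi\circ\pr)$) to arrange that the denominator lies in $\Z K$, so that its Newton polytope sits inside $\ker\bar\phi$; thus $\PP([r_A])\in\P_T(H_1(G)_f,\ker\bar\phi)$. Since $\phi$ is arbitrary, one obtains
\[
\PP([r_A])\in\bigcap_{\phi}\P_T(H_1(G)_f,\ker\bar\phi),
\]
and the heart of the matter is the purely polytope-theoretic identity $\bigcap_{\phi}\P_T(H,\ker\phi)=\PPP_T(H)$, which in turn rests on a nontrivial lemma that an element of $\P(H)$ is a genuine polytope as soon as all of its faces $F_\phi$ are. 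Your proposal contains no analogue of either of these two ingredients.
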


It is worthwhile noting that for group of $P\geq 0$-class the $L^2$-torsion polytope is a $G$-homotopy invariant (rather than just a \emph{simple} $G$-homotopy invariant) of free finite $L^2$-acyclic $G$-CW-complexes and that therefore the condition that its Whitehead group vanishes is not necessary for $P(G)$ to be well-defined. We refer to  \cref{lem:polytope class} for more details on this remark.

We then adapt a strategy of Wegner for proving a vanishing result for the $L^2$-torsion of amenable groups \cite{Wegner2009} and obtain the following partial solution to \cref{conj:polytope amenable}.

\begin{mainthm}[{ \ref{thm:vanishing polytope}} \normalfont (Vanishing $L^2$-torsion polytope)]
	Let $G$ be a group of type $F$ which is of $P\geq 0$-class. Suppose that $G$ contains a non-abelian elementary amenable normal subgroup. Then $G$ is $L^2$-acyclic and we have
	\[ P(G) = 0.\]
	In particular, the $L^2$-torsion polytope of a non-cyclic elementary amenable group of type $F$ \mbox{vanishes.}
\end{mainthm}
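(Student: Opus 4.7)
The plan is to adapt Wegner's strategy for vanishing of $L^2$-torsion \cite{Wegner2000, Wegner2009} to the polytope setting, using the $P\geq 0$-class hypothesis as the polytope analogue of $\det\geq 1$-class. The argument splits into three stages: establishing $L^2$-acyclicity of $G$, deducing $P(G)\geq 0$ in the polytope group, and producing a matching bound $P(G)\leq 0$.

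For $L^2$-acyclicity, I would extract from the non-abelian elementary amenable normal subgroup $N\triangleleft G$ an infinite amenable normal subgroup of $G$: this is $N$ itself if $N$ is infinite, and otherwise one passes to the finite-index centralizer $C_G(N)$ and uses that amenability is closed under extensions together with the non-abelianity of $N$ to produce the required infinite amenable normal subgroup. The classical result that an infinite amenable normal subgroup forces vanishing of all $L^2$-Betti numbers then yields $L^2$-acyclicity. The inequality $P(G)\geq 0$ in $\P_T(H_1(G)_f)$ is then essentially tautological from the hypothesis: $P(G)=P(EG;G)$ is assembled from polytope-homomorphism images of the differentials in the cellular $L^2$-chain complex of a finite model for $EG$, and each such image is a translation class of a non-negative integral polytope by the $P\geq 0$-class assumption.

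The matching bound $P(G)\leq 0$ is the substantive step. I would proceed by transfinite induction along the definition of elementary amenable groups, with base cases for finite groups (handled by a transfer and finite-cover computation that identifies the polytope contribution of a finite normal subgroup as trivial in $\P_T(H_1(G)_f)$, in direct analogy with Wegner's finite-case argument) and for infinite cyclic groups (handled by an infinite cyclic cover / mapping-telescope computation and an analysis of the Newton polytopes of the resulting twisted chain complexes). The closure of the class of elementary amenable groups under extensions and directed unions, combined with the finiteness of a cell model for $EG$, would then propagate the bound from the base cases up to $N$ and hence to $G$.

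The main obstacle I anticipate is the infinite cyclic case, since the analogous statement fails for $G=\Z$ itself: $P(\Z)\neq 0$, as recorded after \cref{conj:polytope amenable}, so a naive mapping-telescope on a single infinite-order element $t\in N$ reintroduces a nonzero polytope class. The non-abelian hypothesis on $N$ is precisely what is needed to circumvent this: every infinite cyclic $\langle t\rangle\leq N$ admits a $G$-conjugate $\langle gtg^{-1}\rangle$ with $gtg^{-1}\neq t$, and the natural strategy is to arrange the two polytope contributions so that their images cancel under the translation quotient defining $\P_T(H_1(G)_f)$. Finally, the \emph{in particular} clause for an elementary amenable group $G$ of type $F$ follows by taking $N=G$ in the non-abelian case; the torsion-free abelian case reduces to $G=\Z^n$ and is handled by direct calculation (with the understanding that $n=1$ must be excluded since $P(\Z)\neq 0$).
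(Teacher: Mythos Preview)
Your argument has a genuine gap at the step you call ``essentially tautological''. The $L^2$-torsion polytope is \emph{not} assembled from $\PP$-images of the cellular differentials: those differentials are in general neither square nor $\D(G)$-invertible, so the $P\geq 0$-class hypothesis does not apply to them. What one actually has is $\tor_u(C_*;\N(G))=[A]-[B]$ in $\widetilde K_1^w(\Z G)$ for two $\Z G$-matrices $A,B$ that become invertible over $\D(G)$ (see \cref{lem:torsion} and the formula $\tau = [(uc+\gamma)_\odd]-[u_\odd]$). Hence $P(G)=\PP([B])-\PP([A])$ is a \emph{difference} of two non-negative classes, and neither inequality $P(G)\geq 0$ nor $P(G)\leq 0$ follows. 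The underlying obstruction is that for a $\Z G$-matrix $A$ invertible over $\D(G)$, the inverse $A^{-1}$ need not have entries in $\Z G$, so $P\geq 0$-class says nothing about $\PP(-[A])$. Your proposed transfinite induction for the other inequality, and the conjugation-cancellation idea for infinite cyclic pieces, do not address this issue.

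The paper's proof bypasses the two-inequality strategy entirely via an Ore localization. One first extracts from the non-abelian elementary amenable normal subgroup $N$ an \emph{abelian} normal subgroup $A\trianglelefteq G$ with $A\cap\ker\big(\pr\colon G\to H_1(G)_f\big)\neq 0$ (this is where non-abelianness of $N$ is used, via a case split according to whether $N$ is virtually abelian). One then sets
\[
S=\{x\in\Z A\setminus\{0\}\mid P(x)=0\text{ in }\P_T(H_1(G)_f)\},
\]
checks that $\Z G$ satisfies the Ore condition with respect to $S$ and that $S^{-1}\Z=0$ (the latter because $1-a\in S$ for some $a\neq 1$). Then $S^{-1}C_*(EG)$ is contractible, so its Reidemeister torsion lies in $\widetilde K_1(S^{-1}\Z G)$, and this class maps to $\tor_u(EG;\N(G))$ under $\widetilde K_1(S^{-1}\Z G)\to\widetilde K_1(\D(G))$. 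The key point is that the induced map $\PP'\colon\widetilde K_1(S^{-1}\Z G)\to\P_T(H_1(G)_f)$ is \emph{identically zero}: for any $S^{-1}\Z G$-invertible matrix $M$ one clears denominators (which costs nothing since $P(s)=0$) to get a $\Z G$-matrix, applies $P\geq 0$-class, and does the same for $M^{-1}$, which is now \emph{also} over $S^{-1}\Z G$. Thus $P(G)=0$ in one stroke; no separate upper and lower bounds are needed.
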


Beyond elementary amenable groups, we provide at least some evidence for \cref{conj:polytope amenable}. In the following proposition, $*$ denotes the involution on the polytope group induced by reflection about the origin (see \cref{sub:pol groups}), and  $\norm$ denotes the seminorm homomorphism introduced in \cref{def:seminorm map}.

\begin{mainprop}[ \ref{prop: amenable}]
	Let $G\neq\Z$ be an amenable group of type $F$ satisfying the Atiyah Conjecture. Then $P(G)$ lies in the kernel of the seminorm homomorphism $\norm\colon \P_T(H_1(G)_f)\to \Map(H^1(G;\R),\R)$ and there is a polytope $P$ such that in $\P_T(H_1(G)_f)$ we have
	\[ P(G) = P-*P. \]
\end{mainprop}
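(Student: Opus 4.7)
The plan is as follows. Since $G$ is of type $F$, it is torsion-free and finitely generated, so $H_1(G)_f$ is finitely generated abelian. By \cref{thm:amenable polytope class}, $G$ is of polytope class, whence the $L^2$-torsion polytope $P(G) \in \P_T(H_1(G)_f)$ is a well-defined homotopy invariant of free finite $L^2$-acyclic $G$-CW-complexes; Li--Thom \cite{LiThom2014} gives $L^2$-acyclicity of $G$, so one may set $P(G) := P(EG;G)$.

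For the vanishing $\norm(P(G)) = 0$, I would invoke the Friedl--L\"uck formula: for $\phi \in H^1(G;\Z)$, the evaluation $\norm(P(G))(\phi)$ equals, up to a sign, the degree of the twisted $L^2$-torsion function $t \mapsto \tor(G; t\phi)$. For amenable $G$ of type $F$ this degree vanishes, since $\ker\phi$ is amenable and the corresponding vanishing of $L^2$-invariants for the restriction of $C_*(EG)$, together with a Wang-sequence-type identification of the twisted torsion, forces the degree to be zero. Density of $H^1(G;\Q)$ in $H^1(G;\R)$ and continuity of $\norm$ then extend the conclusion to all of $H^1(G;\R)$.

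For the decomposition $P(G) = P - *P$, amenability combined with the Atiyah Conjecture makes $\D(G)$ the Ore field of fractions of $\Z G$, so the $L^2$-torsion class is represented by an element $u \in (\D(G)^{\times})_{\ab}$ coming from a chain-level Dieudonn\'e determinant of $C_*(EG)$, with $P(G)$ equal to its image under the polytope homomorphism. Since this homomorphism intertwines the involution $*$, one has $P(G) + *P(G) = P(u \cdot *u)$, and by Li--Thom the Fuglede--Kadison determinant of $u \cdot *u$ equals $1$. The goal is then to produce a representation $u = p \cdot (*p)^{-1}$ for some $p \in \Z G \setminus \{0\}$; the Newton polytope $P := P(p)$ will then satisfy $P(G) = [P] - [*P]$.

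The main obstacle is this last Hilbert--90-style step: it amounts to showing, modulo the kernel of the polytope homomorphism, that the $*$-anti-fixed elements of $(\D(G)^{\times})_{\ab}$ lie in the image of $p \mapsto p \cdot (*p)^{-1}$. This is where the polytope class hypothesis and amenability should combine most decisively --- amenability via the Ore structure of $\D(G)$, and polytope class to control when multiplicative identities in $(\D(G)^{\times})_{\ab}$ descend to the polytope group.
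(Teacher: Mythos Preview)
Your argument for the first assertion---that $P(G)$ lies in $\ker\norm$---is essentially the paper's argument, though the paper phrases the Friedl--L\"uck input more directly as $\norm(P(G))(\phi) = -\chi^{(2)}(EK;\N(K))$ for $K = \ker(\phi\circ\pr)$, and then observes that $K$ is infinite amenable (using $G\neq\Z$) and hence $L^2$-acyclic. Your formulation via the degree of the twisted $L^2$-torsion function is an equivalent entry point via the same Friedl--L\"uck machinery, and the density/continuity extension to $H^1(G;\R)$ is exactly what the paper does.

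The second part, however, has a genuine gap. Your proposed ``Hilbert--90-style'' step---lifting the identity $u\cdot *u = 1$ in $(\D(G)^\times)_\ab$ (or its image under Fuglede--Kadison) to a factorization $u = p\cdot(*p)^{-1}$ with $p\in\Z G$---is not established anywhere and you correctly flag it as an obstacle. There is no reason to expect such a lift to exist at the level of the division ring, and neither amenability nor polytope class gives you purchase on this problem. The paper bypasses this entirely: it invokes a purely polytope-theoretic result (\cref{little lemma}, from \cite{Funke2016}) asserting that in $\P_T(H)$ one has
\[
\ker(\norm) = \im(\id - *).
\]
Once you know $P(G)\in\ker(\norm)$, this immediately yields $P(G) = (R-S) - *(R-S)$ for some $R-S\in\P_T(H)$, and setting $P = R + *S$ gives $P(G) = P - *P$. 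The point is that the decomposition happens entirely inside the polytope group and needs no input from $\D(G)$ or the chain complex beyond the already-established $\norm(P(G)) = 0$. You should replace your second step with an appeal to this structural fact about $\P_T(H)$.
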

\medskip

\subsection*{Acknowledgements} 

The author was supported by the Max Planck Institute for Mathematics in Bonn and the Deutsche Telekom Stiftung. We are grateful to Stefan Friedl, Fabian Henneke, Dawid Kielak, and Wolfgang L\"uck for many fruitful discussions and to the organizers of the \emph{New directions in $L^2$-invariants} workshop at the Hausdorff Institute for Mathematics in Bonn, where some of the ideas for this article were born. We also thank the referee for helfpul comments and various hints to formerly unmentioned references.

\newpage
\tableofcontents

\section{Background on the $L^2$-torsion polytope}\label{ch:preliminaries}

\subsection{The Atiyah Conjecture and $\D(G)$} The construction and our analysis of the $L^2$-torsion polytope requires some knowledge about the Atiyah Conjecture. If $R$ is a ring and $A\in M_{m,n}(R)$ is a matrix, then we let throughout $r_A\colon R^m\to R^n$ denote the $R$-homomorphism (of left $R$-modules) given by right multiplication with $A$.

\begin{conj}[Atiyah Conjecture]
	A torsion-free group $G$ satisfies the \emph{Atiyah Conjecture} (with rational coefficients) if for any matrix $A\in M_{m,n}(\Q G)$ we have 
	\[ \dim_{\N(G)}\big(\ker (r_A\colon  \N(G)^m\to \N(G)^n)\big) \in \Z.\]
\end{conj}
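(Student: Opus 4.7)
The statement above is the Atiyah Conjecture (with rational coefficients), a well-known open problem that this paper uses as a hypothesis rather than one it proposes to prove. Accordingly I will sketch only the strategy by which the conjecture has been verified for the class relevant here, namely torsion-free amenable groups, rather than attempt a general argument.

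The plan is to pass through the chain of ring inclusions $\Q G \subset \N(G) \subset \U(G)$, where $\U(G)$ denotes the algebra of operators affiliated with the group von Neumann algebra. Following Linnell, one defines $\D(G)$ to be the division closure of $\Q G$ inside $\U(G)$: the smallest subring of $\U(G)$ containing $\Q G$ and closed under taking inverses of its units in $\U(G)$. The Atiyah Conjecture for a torsion-free group $G$ is then equivalent to the statement that $\D(G)$ is a skew field. Granting this equivalence, for any $A\in M_{m,n}(\Q G)$ the $\N(G)$-dimension of $\ker(r_A\colon \N(G)^m\to \N(G)^n)$ agrees with the $\D(G)$-dimension of the analogous kernel over $\D(G)$, which is automatically a non-negative integer.

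First I would establish the base case $G=\Z$ (and trivially $G=\{e\}$), which is classical because $\D(\Z)$ coincides with the field of fractions of $\Q[\Z]$. The heart of the argument is closure under extensions: given $1\to N\to G\to Q\to 1$ with $N$ and $Q$ in the class, one promotes $G$ into the class by analyzing $\D(G)$ as a crossed product built from $\D(N)$ and $Q$, using that in the torsion-free setting the relevant crossed products are Ore domains. For torsion-free elementary amenable groups, this step, combined with closure under directed unions, yields an induction along the derived series that reduces everything to $\Z$. For general torsion-free amenable groups one supplements the extension step with approximation techniques built on F\o{}lner exhaustion (L\"uck approximation and the refinements of Linnell--Schick, Elek--Szab\'o) to bootstrap from the elementary amenable case.

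The main obstacle in the general non-amenable setting is that, once one leaves the realm of groups admitting F\o{}lner-type exhaustions, there is no systematic mechanism to produce the inverses in $\U(G)$ that would be needed to make $\D(G)$ a division ring; one does not even know in general that $\D(G)$ is semisimple. A genuinely new technique would therefore be required for a proof in full generality, and for the purposes of the present paper the Atiyah Conjecture is best treated as a standing hypothesis on $G$.
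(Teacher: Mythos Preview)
You are right that the statement is a \emph{definition} of what it means for a torsion-free group to satisfy the Atiyah Conjecture, and the paper offers no proof of it; it is used throughout as a standing hypothesis. So there is no proof in the paper to compare against, and your recognition of this is correct.

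That said, your sketch contains a substantive factual error. You write that for general torsion-free amenable groups one can ``bootstrap from the elementary amenable case'' via F{\o}lner-type approximation (L\"uck approximation, Linnell--Schick, Elek--Szab\'o). No such argument is known. The Atiyah Conjecture is established for torsion-free \emph{elementary} amenable groups (Linnell), and more generally for Linnell's class $\CC$ built from free groups by directed unions and elementary-amenable extensions, but it remains open for arbitrary torsion-free amenable groups. L\"uck approximation controls limits of normalized Betti numbers along residual chains; it does not by itself force integrality of $L^2$-Betti numbers, and F{\o}lner exhaustion gives no mechanism for producing the inverses needed to make $\D(G)$ a skew field. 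The paper is consistent with this: in \cref{D:amenable} it records the elementary amenable case with a citation, while \cref{thm:amenable polytope class} and \cref{conj:polytope amenable} explicitly retain ``satisfying the Atiyah Conjecture'' as a hypothesis on amenable $G$. Your outline of the elementary amenable case (base case $\Z$, crossed-product/Ore analysis for extensions, closure under directed unions) is accurate in spirit, but the claimed extension to all amenable groups should be withdrawn.
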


Here $\N(G)$ is the group von Neumann algebra of $G$ and $\dim_{\N(G)}$ denotes the dimension function on $\N(G)$-modules, see \cite[Definition 1.1 and Definition 6.20]{Lueck2002}. For a survey on the status of the Atiyah Conjecture we refer to \cite[Theorem 3.2]{FriedlLueck2015}. In order to explain its relevance in our context we need the following objects.

\begin{dfn}[$\U(G)$ and $\D(G)$]
	Let $\U(G)$ denote the algebra of operators affiliated to $\N(G)$, see \cite[Chapter 8]{Lueck2002}. Algebraically, this is the Ore localization of $\N(G)$ with respect to the set of weak isomorphisms, see \cite[Theorem 8.22 (1)]{Lueck2002}.
	
	Let $\D(G)$ be the smallest subring of $\U(G)$ which contains $\Q G$ and is \emph{division closed}, meaning that every element of $\D(G)$ which is a unit in $\U(G)$ is already a unit in $\D(G)$.
\end{dfn}

Thus we obtain a rectangle of inclusions
\[ \xymatrix{
	\Q G \ar[d]\ar[r] & \N(G)\ar[d]\\
	\D(G)\ar[r] & \U(G),
}\]
and using these rings we recall the following result.

\begin{prop}\label{D:skew field}
	A torsion-free group $G$ satisfies the Atiyah Conjecture if and only if $\D(G)$ is a skew-field.
\end{prop}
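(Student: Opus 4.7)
The plan is to prove both implications by tracking von Neumann dimensions along the inclusions $\Q G \subset \D(G) \subset \U(G)$ and exploiting the flatness of $\U(G)$ over $\N(G)$ together with the dimension identity $\dim_{\N(G)} M = \dim_{\U(G)}(\U(G) \otimes_{\N(G)} M)$ that is standard for finitely presented $\N(G)$-modules.

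For the direction $\D(G)$ skew-field $\Rightarrow$ Atiyah, I would take $A \in M_{m,n}(\Q G)$ and regard it as a matrix over the skew-field $\D(G)$. Then $\ker(r_A\colon \D(G)^m \to \D(G)^n)$ is automatically free of some integer rank $k$, because every module over a skew-field is free of well-defined dimension. Applying the (automatically) flat base change $\U(G) \otimes_{\D(G)} -$ transports this kernel to $\ker(r_A\colon \U(G)^m \to \U(G)^n) \cong \U(G)^k$, and chasing the dimension identifications described above then yields $\dim_{\N(G)} \ker(r_A\colon \N(G)^m \to \N(G)^n) = k \in \Z$.

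For the converse, I would use the characterization of $\D(G)$ as the \emph{rational closure} of $\Q G$ in $\U(G)$: every element of $\D(G)$ arises as an entry of $B^{-1}$ for some matrix $B \in M_{N,N}(\Q G)$ that becomes invertible in $\U(G)$. Given $0 \neq x \in \D(G)$ realized as such an entry, the task is to show $x$ is a unit in $\U(G)$; division-closedness then automatically places its inverse back in $\D(G)$. Cramer-style manipulations reduce the invertibility of $x$ to a statement about vanishing of certain kernels of matrices over $\Q G$, and the Atiyah conjecture, forcing the relevant $\N(G)$-dimensions to lie in $\Z \cap [0,N]$, eliminates the fractional possibilities that would obstruct invertibility.

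The main obstacle lies in this converse direction: translating the purely numerical integrality assertion of the Atiyah conjecture into the structural assertion that every nonzero element of the algebraically defined ring $\D(G)$ is invertible. Setting up the rational closure formalism and making the Cramer-style bookkeeping watertight is the heart of the matter; I would essentially model the argument on Linnell's original treatment and refer to the systematic account in Lück's monograph rather than re-deriving it here.
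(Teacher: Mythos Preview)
Your proposal is correct and ultimately aligns with the paper's own treatment: the paper simply cites \cite[Lemma 10.39]{Lueck2002}, and your sketch is a faithful outline of precisely that argument, likewise deferring to L\"uck's monograph for the bookkeeping. One small point worth making explicit in your write-up: the identification of $\D(G)$ with the \emph{rational} closure of $\Q G$ in $\U(G)$ (rather than merely the division closure, which is its definition here) uses that $\U(G)$ is von Neumann regular, so that division closure and rational closure coincide; this is standard and also in L\"uck's book, but should be flagged since it is what makes the Cramer-style access to arbitrary elements of $\D(G)$ legitimate.
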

\begin{proof}
	See \cite[Lemma 10.39]{Lueck2002}.
\end{proof}

The next theorem, which combines results of Linnell and Tamari, is the central reason why the $L^2$-torsion polytope is tractable for amenable groups. 

\begin{thm}[$\D(G)$ of amenable groups]\label{D:amenable} 
	Any torsion-free elementary amenable group satisfies the Atiyah Conjecture.
	
	Moreover, if $G$ is a torsion-free amenable group satisfying the Atiyah Conjecture, then $\Q G$ satisfies the Ore condition with respect to $T = \Q G\s-\{0\}$ and there is an isomorphism $\D(G) \cong T^{-1}\Q G$. In particular, $\D(G)$ is flat over $\Q G$.
\end{thm}
\begin{proof}	
	The first part is due to Linnell \cite[Theorem 2.3]{Linnell2006}, see also \cite[Theorem 1.2]{KrophollerEtal2006}. 
	
	The fact that $\Q G$ satisfies the Ore condition with respect to $T$ goes back to Tamari \cite{Tamari1957}, see also \cite[Example 8.16 and Lemma 10.15]{Lueck2002} for a proof. Recalling the notion of division closure, it is then easy to see that the inclusion $\Q G\to \D(G)$ localizes to an isomorphism $T^{-1}\Q G\tolabel{\cong} \D(G)$.
\end{proof}

If $R$ is a ring and $0\to K\to G\tolabel{p} Q\to 0$ is a group extension, then any choice of (set-theoretic) section $s\colon Q\to G$ for $p$ induces an isomorphism 
\begin{equation}\label{eq:splitting}
	RG \cong (RK)*Q.
\end{equation} 
Here the right-hand side denotes a crossed product ring of $Q$ with coefficients in $RK$. We refer to \cite[Section 10.3.2]{Lueck2002} for a survey on crossed product rings and \cite[Example 10.53]{Lueck2002} for the details of the above statement. Here and henceforth we suppress the structure maps of crossed product rings from the notation. It will play an important role for us that $\D(G)$ shares similar structural properties. More precisely, we have

\begin{lem}[$\D(G)$ and extensions]\label{lem:DG and extensions}
	Let $G$ be a torsion-free group satisfying the Atiyah Conjecture. Let $0\to K\to G\tolabel{p} H\to 0$ be a group extension such that $H$ is finitely generated free-abelian. Then $K$ satisfies the Atiyah Conjecture and any choice of (set-theoretic) section $s\colon H\to G$ for $p$ determines a crossed product ring $\D(K)*H$ together with an inclusion $\D(K)*H\subset \D(G)$ which restricts to the isomorphism $ (\Q K)*H\cong \Q G$ of (\ref{eq:splitting}). Moreover, $\D(K)* H$ satisfies the Ore condition with respect to $T = (\D(K)* H)\s-\{0\}$, and the inclusion induces a $\D(K)$-isomorphism 
	\begin{equation}\label{eq:D iso}
		  T^{-1}(\D(K)* H) \cong \D(G).
	\end{equation}
	
	If $H$ is infinite cyclic, then $\D(K)* H$ is isomorphic to the ring $\D(K)_t[u^{\pm}]$ of twisted Laurent polynomials, where the twisting $t$ depends on $s$.
\end{lem}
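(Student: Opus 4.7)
My plan is to work inside the ambient algebra $\U(G)$, where both $\D(G)$ and $\D(K)\subset \U(K) \subset \U(G)$ naturally live, and to assemble the required structure step by step. First I would verify the Atiyah Conjecture for $K$ by the standard induction/restriction principle: the isomorphism of $\N(G)$-modules $\N(G)\otimes_{\N(K)}\ker(r_A:\N(K)^m\to\N(K)^n) \cong \ker(r_A:\N(G)^m\to\N(G)^n)$ for $A\in M_{m,n}(\Q K)$, combined with the dimension-compatibility of $\N(G)$ over $\N(K)$, transports rationality from $G$ to $K$. Next I would establish the crucial inclusion $\D(K)\subset \D(G)$: the intersection $\D(G)\cap \U(K)$ is division closed inside $\U(K)$ (if $x$ in this intersection is a unit in $\U(K)$, then it is a unit in the larger ring $\U(G)$, and the division closure of $\D(G)$ there forces $x^{-1}\in \D(G)$), and it contains $\Q K$, so the minimality of $\D(K)$ yields $\D(K)\subset \D(G)\cap\U(K)$.

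With the section $s$ in hand, each $s(h)\in G\subset \D(G)^\times$ determines an inner automorphism $\phi_h(a)=s(h)as(h)^{-1}$ of $\D(G)$. The same division-closure minimality argument, applied to $\phi_h(\D(K))$ and $\phi_h^{-1}(\D(K))$ (both of which are division closed in $\U(K)$ and contain $\Q K$), shows that $\phi_h$ restricts to an automorphism of $\D(K)$. Together with the $K$-valued cocycle $c(h_1,h_2):=s(h_1)s(h_2)s(h_1h_2)^{-1}$, the pair $(\phi,c)$ assembles into a crossed product $\D(K)*H$, and the assignment $\overline h\mapsto s(h)$ supplies a ring embedding $\D(K)*H\hookrightarrow \D(G)$ that restricts to the isomorphism $(\Q K)*H\cong \Q G$ of~\eqref{eq:splitting}.

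For the Ore condition and the identification~\eqref{eq:D iso}, note that $K$ now satisfies the Atiyah Conjecture, so $\D(K)$ is a skew field by~\cref{D:skew field}. Hence $\D(K)*H$ is a crossed product of a skew field by a finitely generated torsion-free abelian (in particular polycyclic) group, which by standard results (Hilbert/Passman) is a noetherian domain, hence right Ore by Goldie's theorem. The universal property then gives a ring map $T^{-1}(\D(K)*H)\to \D(G)$, since every element of $T$ is nonzero and therefore invertible in the skew field $\D(G)$. The image is itself a skew field inside $\U(G)$ containing $\Q G$, so it is division closed and contains $\D(G)$ by minimality; the reverse containment is automatic from $\D(K)*H\subset \D(G)$. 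Finally, in the case $H=\Z$, choosing $u:=s(1)$ turns $\{u^n\}_{n\in\Z}$ into a $\D(K)$-basis of $\D(K)*H$ with relation $ua=\phi_1(a)u$, which is by definition the twisted Laurent polynomial ring $\D(K)_t[u^\pm]$ with $t=\phi_1$.

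The main obstacle is the opening move: simultaneously realising $\D(K)$ inside $\D(G)$ and verifying that conjugation by $s(h)$ preserves it. Both rely on exploiting division closure in the right ambient ring — namely division closure in $\U(K)$ rather than in $\U(G)$ — and once this is done the remaining pieces follow by formal manipulations with Ore localisations and the noetherianity of crossed products of skew fields by polycyclic groups.
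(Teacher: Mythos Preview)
Your argument is correct and in fact supplies the details that the paper itself omits: the paper's ``proof'' consists solely of the citation ``See \cite[Theorem 3.6 (3)]{FriedlLueck2015} and \cite[Example 10.54]{Lueck2002}''. What you have written is essentially the standard route taken in those references --- inheritance of the Atiyah Conjecture via flatness of $\N(G)$ over $\N(K)$, the division-closure minimality trick to get $\D(K)\subset\D(G)$ and stability under conjugation, noetherianity of the crossed product of a skew field by a polycyclic group, and identification of the Ore localisation with $\D(G)$ by a sandwich argument using that $\D(G)$ is a skew field. Two small points worth making explicit: the domain property of $\D(K)*H$ uses that $H$ is orderable (a leading-term argument), and the invertibility in $\D(G)$ of nonzero elements of $\D(K)*H$ uses \cref{D:skew field} for $G$; both are implicit in what you wrote.
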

\begin{proof}
	See \cite[Theorem 3.6 (3)]{FriedlLueck2015} and \cite[Example 10.54]{Lueck2002}, where also twisted Laurent polynomial rings are treated in detail.
\end{proof}

\subsection{Weak $K_1$-groups and universal $L^2$-torsion}
\label{sub:universal l2}

Let $G$ be a torsion-free group satisfying the Atiyah Conjecture. Define the \emph{weak $K_1$-group} $K_1^w(\Z G)$ as the abelian group whose generators $[f]$ are $\Z G$-maps $f\colon\Z G^n\to \Z G^n$ that become invertible over $\D(G)$, subject to the following relations:
If $f, g\colon \Z G^n\to\Z G^n$ are two such $\Z G$-maps, then require
\begin{equation}\label{reidemeister1} [g\circ f] = [f] + [g]. \end{equation}

If $f\colon \Z G^m\to\Z G^m,\: g\colon \Z G^n\to \Z G^n, \:h\colon \Z G^n\to\Z G^m$ are $\Z G$-maps such that $f$ and $g$ become invertible over $\D(G)$, then we require the relation
\begin{equation}\label{reidemeister2} \left[\begin{pmatrix}
f & h\\
0 & g
\end{pmatrix}\right] = [f] + [g].\end{equation}

This definition coincides with \cite[Definition 1.1]{FriedlLueck2015b} since $f\colon\Z G^n\to \Z G^n$ becomes invertible over $\D(G)$ if and only if $f$ induces a weak isomorphism $L^2(G)^n\to L^2(G)^n$. This follows from \cite[Lemma 1.21]{FriedlLueck2015b} and \cite[Lemma 10.39]{Lueck2002}.

We define the \emph{reduced weak $K_1$-group} and the \emph{weak Whitehead group} as the quotients
\begin{gather*}
	\widetilde{K}_1^w(\Z G) = K_1^w(\Z G)/\{[\pm\id\colon \Z G\to\Z G]\};\\
	\Wh^w(G)= K_1^w(\Z G)/\{[r_{\pm g}\colon \Z G\to \Z G]\:\mid g\in G\}.
\end{gather*}

There are obvious maps
\begin{gather*}
\widetilde{K}_1(\Z G) \to \widetilde{K}_1^w(\Z G)  \to\widetilde{K}_1(\D(G));\\
\Wh(G)\to \Wh^w(G) \to  K_1(\D(G))/\{[\pm g]\:\mid g\in G\}.
\end{gather*}

Recall that for any associative unital ring $R$ an $R$-chain complex $C_*$ is \emph{finite} if each chain module is finitely generated and only finitely many chain modules are non-trivial. It is \emph{based free} if each chain module is a free $R$-module and equipped with an equivalence class of $R$-basis, where two $R$ bases $B, B'$ are \emph{equivalent} if there exists a bijection $\sigma\colon B\to B'$ such that $\sigma(b) = \pm b$ for all $b\in B$. It is \emph{contractible} if there is a chain homotopy $\id_{C_*}\simeq 0$. If $C_*$ is a based free finite contractible $R$-chain complex, then we denote its Reidemeister torsion by $\tau(C_*)\in\widetilde{K}_1(R)$. Likewise we denote the Whitehead torsion of a $G$-homotopy equivalence $f\colon X\to Y$ of finite free $G$-CW-complexes by $\tau(f)\in\Wh(G)$.

A $\zg$-chain complex is \emph{$L^2$-acyclic} if all $L^2$-Betti numbers \[\betti_n(C_*;\N(G)) = \dim_{\N(G)} H_n( \N(G)\otimes_\zg C_*)\] 
vanish. For any based free finite $L^2$-acyclic $\Z G$-chain complex $C_*$ Friedl-Lück construct a \emph{universal $L^2$-torsion} 
\[\tor_u(C_*;\N(G)) \in \widetilde{K}_1^w(\Z G).\] 
Its construction is an adaption of the Reidemeister and Whitehead torsion to the $L^2$-setting. We briefly recall the definition of Reidemeister torsion here in order to give a flavour of these invariants. Let $K_1(\Z G)$ be the abelian group whose generators $[f]$ are $\Z G$-automorphisms $f\colon P\to P$ of finitely generated projective $\zg$-modules and whose relations are the same as for $K_1^w(\Z G)$, see (\ref{reidemeister1}) and (\ref{reidemeister2}). A $\zg$-chain complex $C_*$ is \emph{contractible} if $C_*$ admits a \emph{chain contraction}, i.e., a sequence of $\zg$-maps $\gamma_n\colon C_n\to C_{n+1}$ such that $c_{n+1}\circ \gamma_n + \gamma_{n-1}\circ c_n = \id_{C_n}$, where $c_n\colon C_n \to C_{n-1}$ denotes the differential. If $C_*$ is contractible, then its \emph{Reidemeister torsion} 
\[ \rho(C_*)\in \widetilde{K}_1(\Z G)\]
is defined as the class of the $\zg$-isomorphism
\[ c+\gamma\colon \bigoplus_{n\in \Z} C_{2n+1}\to \bigoplus_{n\in \Z} C_{2n}.\]
As further reference for algebraic $K$-theory and torsion invariants we recommend \cite{Silvester1981} or \cite{Rosenberg1994}, where it is proved that $c+\gamma$ is indeed a $\zg$-isomorphism and that its class in $\widetilde{K}_1(\Z G)$ does not depend on the choice of $\gamma$.

The passage from Reidemeister torsion to universal $L^2$-torsion is achieved by replacing \emph{chain contraction} with the weaker and more technical notion \emph{weak chain contraction}, see \cite[Definition 1.4]{FriedlLueck2015b}. Possessing a weak chain contraction turns out to be equivalent to being $L^2$-acyclic, see \cite[Lemma 1.5]{FriedlLueck2015b}. This is why the universal $L^2$-torsion is defined for $L^2$-acyclic chain complexes.

By \cite[Remark 1.16]{FriedlLueck2015b} the universal $L^2$-torsion deserves its name in the sense that it encapsulates all other $L^2$-torsion invariants, including the (classical) $L^2$-torsion $\tor(C_*;\N(G)) \in \R$, twisted $L^2$-torsion functions \cite{DuboisEtal2014, DuboisEtal2015, DuboisEtal2015b} and twisted $L^2$-Euler characteristics \cite{FriedlLueck2015}. 

If $X$ is a finite free $L^2$-acyclic $G$-CW-complex, then applying this to the cellular $\zg$-chain complex $C_*(X)$ produces the \emph{universal $L^2$-torsion of $X$}
\[\tor_u(X;\N(G)) \in \Wh^w(G).\] 
Its main properties are collected in \cite[Theorem 2.5]{FriedlLueck2015b}. We point out two of its properties that we need in this paper.

First, given a $G$-homotopy equivalence $f\colon X\to Y$ between finite free $L^2$-acyclic $G$-CW-complexes, then
\begin{equation}\label{eq:hom invariance}
\tor_u(Y;\N(G)) - \tor_u(X;\N(G)) = \zeta(\tau(f)).
\end{equation}
where $\zeta\colon \Wh(G)\to \Wh^w(G)$ is the obvious homomorphism.

We include the second statement here for future reference as a small lemma.

\begin{lem}\label{lem:torsion}
	Let $C_*$ be a finite based free $L^2$-acyclic $\zg$-chain complex. Then $\D(G)\otimes_{\zg} C_*$ is a contractible $\D(G)$-chain complex, and the canonical homomorphism $i\colon \widetilde{K}_1^w(\Z G)  \to\widetilde{K}_1(\D(G))$ satisfies
	\begin{equation}\label{eq:whitehead}
		i\big(\tor_u(C_*;\N(G))\big) = \tau(\D(G)\otimes_{\zg}C_*).
	\end{equation}
\end{lem}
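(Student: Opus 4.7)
The plan is to prove the two assertions separately, with the contractibility statement being a consequence of the Atiyah Conjecture (in force throughout this section) and the torsion identity being a direct translation of the construction of $\tor_u$.

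For contractibility, I would first invoke \cref{D:skew field} to obtain that $\D(G)$ is a skew-field, since $G$ is assumed torsion-free and to satisfy the Atiyah Conjecture. The complex $\D(G)\otimes_{\zg} C_*$ is then a bounded chain complex of finitely generated free modules over a skew-field, so it is contractible if and only if its homology vanishes in every degree. To verify vanishing of homology I would use the tower $\zg \subset \D(G)\subset \U(G)$. On the one hand, $L^2$-acyclicity of $C_*$ says that $H_n(\N(G)\otimes_{\zg} C_*)$ has trivial $\N(G)$-dimension; since $\U(G)$ is dimension-flat over $\N(G)$ (\cite[Theorem 8.31]{Lueck2002}), the induced complex $\U(G)\otimes_{\zg} C_*$ has vanishing homology. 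On the other hand, $\U(G)$ is a nonzero module over the skew-field $\D(G)$ and hence faithfully flat, so from
\[
\U(G)\otimes_{\D(G)}\bigl(\D(G)\otimes_{\zg} C_*\bigr)\; \cong\; \U(G)\otimes_{\zg} C_*
\]
I can conclude that $H_n(\D(G)\otimes_{\zg} C_*)=0$ for every $n$. This yields contractibility.

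For the torsion identity, my plan is to unwind the definition of $\tor_u$ as given in \cite[Section 1]{FriedlLueck2015b}. Recall that $\tor_u(C_*;\N(G))$ is built by choosing a $\D(G)$-chain contraction $\gamma$ of $\D(G)\otimes_{\zg} C_*$ and using it to identify a matrix over $\Z G$ (the relevant combination of boundary and contraction maps in odd or even degrees) which becomes invertible over $\D(G)$; its class in $\widetilde K_1^w(\Z G)$ is the universal $L^2$-torsion. By construction, the classical Reidemeister torsion $\tau(\D(G)\otimes_{\zg} C_*)\in \widetilde K_1(\D(G))$ is defined through the same formal alternating product, now applied in $\widetilde K_1(\D(G))$ to the image of this matrix. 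Since the relations defining $\widetilde K_1^w(\Z G)$ (multiplicativity and block-triangularity) are tautologically sent by $i$ to the analogous relations in $\widetilde K_1(\D(G))$, the identity $i(\tor_u(C_*;\N(G)))=\tau(\D(G)\otimes_{\zg} C_*)$ holds term-by-term.

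The one point requiring a little care — and the place I would expect the main bookkeeping — is independence of the chain contraction $\gamma$ chosen to present $\tor_u$: different choices give the same class in $\widetilde K_1^w(\Z G)$, and I would check that the induced representative in $\widetilde K_1(\D(G))$ agrees with the standard Reidemeister-torsion formula regardless of that choice. This is essentially a re-run of the classical argument that Reidemeister torsion of a contractible based free complex over a ring is well defined, now transported across the map $i$; no new ingredient beyond the definitions is needed.
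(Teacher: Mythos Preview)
Your contractibility argument is correct and in fact spells out what the paper merely cites as \cite[Lemma 1.21]{FriedlLueck2015b}; the faithful-flatness route through $\U(G)$ is a clean way to see it.

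For the torsion identity your overall strategy (unwind the definition and match it with the classical Reidemeister formula) is the same as the paper's, but your description of how $\tor_u$ is actually defined is off, and this is where the real content lives. The universal $L^2$-torsion is \emph{not} built from a $\D(G)$-chain contraction of $\D(G)\otimes_{\zg}C_*$; it is built from a \emph{weak chain contraction} over $\zg$, i.e.\ a pair $(u_*,\gamma_*)$ of $\zg$-maps where $u_*\colon C_*\to C_*$ is a chain map that becomes invertible over $\D(G)$ and $\gamma_*$ is a $\zg$-chain homotopy $u_*\simeq 0_*$ (with $\gamma_n u_n = u_{n+1}\gamma_n$). The resulting class is $[(uc+\gamma)_\odd]-[u_\odd]$ in $\widetilde K_1^w(\zg)$, and the correction term $-[u_\odd]$ is exactly what prevents the identity from being a tautology. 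The paper's proof therefore does one honest calculation you skipped: it shows that for \emph{any} contractible based free complex $E_*$ over a ring $R$, any chain isomorphism $u_*$, and any homotopy $\gamma_*\colon u_*\simeq 0_*$ satisfying the commutation relation, one has
\[
\tau(E_*) = [(uc+\gamma)_\odd] - [u_\odd] \in \widetilde K_1(R).
\]
Applying this with $R=\D(G)$, $E_*=\D(G)\otimes_\zg C_*$, and $(u,\gamma)$ the image of the weak chain contraction gives the identity immediately. Your last paragraph about independence of $\gamma$ is a red herring: well-definedness of $\tor_u$ is already handled in \cite{FriedlLueck2015b}; what is needed here is the above nonstandard formula for $\tau$ in the presence of the auxiliary isomorphism $u$.
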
 
\begin{proof}
	The chain complex $\D(G)\otimes_{\zg}C_*$ is contractible by \cite[Lemma 1.21]{FriedlLueck2015b}.
	
	Let $R$ be any associative unital ring and $E_*$ a finite based free contractible $R$-chain complex. If $u_*\colon E_*\to E_*$ is a chain isomorphism and $\gamma_*\colon u_*\simeq 0_*$ is a chain homotopy such that $\gamma_n\circ u_n = u_{n+1}\circ\gamma_n$, then we have an equality
	\begin{equation} \label{eq:torsion formula}
		\tau(E_*) = [(uc+ \gamma)_\odd] - [u_\odd] \in \widetilde{K}_1(R).
	\end{equation}
	This follows in exactly the same way as the argument leading to \cite[Equation (1.8)]{FriedlLueck2015b}. Now the desired equation (\ref{eq:whitehead}) follows from this by comparing (\ref{eq:torsion formula}) with the definition of universal $L^2$-torsion \cite[Definition 1.7]{FriedlLueck2015b}.
\end{proof}

\subsection{Integral polytope groups}\label{sub:pol groups}

Let $H$ be a finitely generated free-abelian group. An integral polytope in $V_H = H\otimes_\Z\R$ is the convex hull of finitely many points in $H$, considered as a lattice in $V_H$. The \emph{Minkowski sum} of two integral polytopes $P$ and $Q$ in $V_H$ is defined by pointwise addition, i.e.,
\[ P+Q = \{p+q\in V_H\mid p\in P, q\in Q \}.\]
Denote by $\PPP(H)$ the commutative monoid of all integral polytopes in $V_H$ with the Minkowski sum as addition. It is cancellative, see e.g. \cite[Lemma 3.1.8]{Schneider1993}. Define the \emph{integral polytope group} $\P(H)$ to be the Grothendieck group associated to this commutative monoid. Thus elements are given by formal differences $P-Q$ of integral polytopes $P,Q\in \PPP(H)$, and two such differences $P-Q$, $P'-Q'$ are equal if and only if $P+Q' = P'+Q$ holds as subsets in $V_H$.

There is an injection of abelian groups
\begin{equation} \label{polytope injection}
H\to \P(H),\;\; h\mapsto \{h\}
\end{equation}
and we let $\P_T(H)$ be the cokernel of this map. The subscript $T$ stands for \emph{translation} since two polytopes become identified in $\P_T(H)$ if and only if there is a translation on $V_H$ mapping one bijectively to the other. We let $\PPP_T(H)$ be the image of the composition $\PPP(H)\to \P(H) \to \P_T(H)$.

The group $\P(H)$ carries a canonical involution induced by reflection about the origin, i.e.,
\begin{equation}\label{def:involution}
*\colon \P(H)\to \P(H),\;\; P\mapsto *P = \{-p\mid p\in P\}.
\end{equation}
This involution descends to an involution $*\colon\P_T(H)\to\P_T(H)$.

A homomorphism $f\colon H\to H'$ of finitely generated free-abelian groups induces homomorphisms 
\begin{gather*}
\P(f)\colon \P(H)\to \P(H');\\
\P_T(f)\colon \P_T(H)\to \P_T(H')
\end{gather*}
by sending the class of a polytope $P$ to the class of the polytope $f(P)$. If $f$ is injective, then both $\P(f)$ and $\P_T(f)$ are easily seen to be injective as well. Thus if $G\subseteq H$ is a subgroup, then we will always view $\P(G)$ (respectively $\P_T(G)$) as a subgroup of $\P(H)$ (respectively $\P_T(H)$).

\begin{ex}\label{ex:polytope in dim 1}
	Integral polytopes in $V_\Z = \R$ are just intervals $[m,n]\subseteq\R$ starting and ending at integral points. Thus we have $\P(\Z) \cong \Z^2$, where an explicit isomorphism is given by sending the class $[m,n]$ to $(m, n-m)$. Under this isomorphism, the involution corresponds to $*(k,l) = (-l-k, l)$. Similarly, $\P_T(\Z) \cong\Z$, where an explicit isomorphism is given by sending the element $[m,n]$ to $n-m$. The involution $*$ on $\P_T(\Z)$ is the identity.
\end{ex}

The structure of the integral polytope group was studied in detail by Cha-Friedl and the author \cite{ChaFriedlFunke2015} and by the author \cite{Funke2016}.

\subsection{The polytope homomorphism}

Let $G$ be a torsion-free group satisfying the Atiyah Conjecture such that $H_1(G)_f$, the free part of the first integral homology $H_1(G)$ of $G$ is finitely generated. Under these conditions, Friedl-Lück \cite[Section 6.2]{FriedlLueck2015b} define a \emph{polytope homomorphism}
\[\PP\colon K_1^w(\Z G)\to \P(H_1(G)_f). \]
Earlier versions of it had at least implicitly been considered for torsion-free elementary amenable groups \cite{FriedlHarvey2007, Friedl2007}. The polytope homomorphism is constructed as a composition 
\begin{equation}\label{eq:pol hom}
	K_1^w(\zg) \tolabel{i}  K_1(\D(G)) \tolabel{\det_{\D(G)}} \D(G)^\times_\ab \tolabel{P}\P(H_1(G)_f),
\end{equation}
where the first map is the canonical map, the second is the Dieudonné determinant \cite{Dieudonne1943} which is in fact an isomorphism (see \cite[Corollary 2.2.6]{Rosenberg1994} or \cite[Corollary 4.3]{Silvester1981}), and the third relies on the structural properties of $\D(G)$ given in \cref{lem:DG and extensions}. More precisely we let $K$ be the kernel of the projection $\pr\colon G\to H_1(G)_f = H$ and define
\[  P'\colon \D(K)* H\s-\{0\}\to \PPP(H)\] 
as follows: Given a non-trivial element $x = \sum_{h\in H} x_h\cdot h\in\D(K)* H$ we let $P'(x)$ be the convex hull of the set $\{h\in H \mid x_h\neq 0\}$. Then $P'$ is a homomorphism of monoids and induces a group homomorphism 
\[  P'\colon  \big(T^{-1}(\D(K)* H)\big)^\times_\ab\to \P(H),\;\; t^{-1}s\mapsto P'(s)-P'(t).\] 
Now we let $P$ be the composition
\begin{equation}\label{eq:P}
  P\colon \D(G)^\times_\ab \tolabel{\cong }  \big(T^{-1}(\D(K)* H)\big)^\times_\ab\tolabel{P'} \P(H),
\end{equation}
where the first map is the isomorphism appearing in \cref{lem:DG and extensions}. We will denote the induced maps
\begin{gather*}
	\PP\colon \widetilde{K}_1^w(\Z G)\to \P_T(H_1(G)_f)\\
	\PP\colon \Wh^w(G)\to \P_T(H_1(G)_f)
\end{gather*}
by the same symbol.

\begin{notation}\label{notation P(x)}
	For non-trivial $x\in \zg$ we denote the image of the class of $x$ in $\D(G)^\times_\ab$ under the map $P$ simply by $P(x)\in \P_T(H_1(G)_f)$. This is the same as $\PP\big( [r_x\colon \zg\to\zg]\big)$.
\end{notation}

\subsection{The $L^2$-torsion polytope}\label{sub:pol} The definition of our main object of study is now fairly simple.

\begin{dfn}[$L^2$-torsion polytope]\label{def:l2 polytope}
	Let $G$ be a torsion-free group satisfying the Atiyah Conjecture such that $H_1(G)_f$ is finitely generated. Let $X$ be a finite free $L^2$-acyclic $G$-CW-complex. Then the \emph{$L^2$-torsion polytope of $X$} is defined as the image of the negative of its universal $L^2$-torsion under the polytope homomorphism, i.e.,
	\[P(X;G) = \PP\big(-\tor_u(X;\N(G))\big)\in \P_T(H_1(G)_f).\]
	
	Let $G$ be a group of type $F$ satisfying the Atiyah Conjecture. If $G$ is $L^2$-acyclic and satisfies $\Wh(G) = 0$, then we may define the \emph{$L^2$-torsion polytope of $G$} to be
	\[ P(G) = P(EG;G) \in \P_T(H_1(G)_f).\]
\end{dfn}

 \begin{remark}[Assumptions appearing in \cref{def:l2 polytope}]
 	The assumption $\Wh(G) = 0$ appearing above ensures that the $L^2$-torsion polytope of groups is well-defined, see (\ref{eq:hom invariance}). Conjecturally, however, this assumption is obsolete: Any group of type $F$ is torsion-free, and it is conjectured that the Whitehead group of any torsion-free group vanishes, see \cite[Conjecture 3]{LueckReich2005}. There is also no counterexample to the Atiyah Conjecture known. Thus the $L^2$-torsion polytope is potentially an invariant for all $L^2$-acyclic groups of type $F$.
 	
 	Within the class of amenable groups all torsion-free virtually solvable groups are known to have trivial Whitehead group since they satisfy the $K$-theoretic Farrell-Jones Conjecture, as proved by Wegner \cite{Wegner2013}.
 \end{remark}

\section{Groups of $P\geq 0$-class}\label{ch:pol class}

In this section we introduce a polytope analogue of the notion $\det\geq 1$-class concerning the Fuglede-Kadison determinant \cite[Definition 3.112]{Lueck2002}. First we need a partial order on polytope groups.

\begin{dfn}[Partial order on polytope groups]\label{def:partial order}
	Let $H$ be a finitely generated free-abelian group. We define a partial order on $\P(H)$ by declaring
	\[ P-Q\leq P'-Q' \;\text{ if and only if }\; P+Q'\subset P'+Q.\]
	Likewise, we define a partial order on the translation quotient $\P_T(H)$ by declaring
	\[ P-Q\leq P'-Q' \;\text{ if and only if }\; P+Q'\subset P'+Q\text{ up to translation}.\]
\end{dfn}

It is easy to see that this definition does not depend on the choice of representatives.

\begin{dfn}[$P\geq 0$-class and polytope class]
	A group $G$ is \emph{of $P\geq 0$-class} if it is torsion-free, satisfies the Atiyah Conjecture, $b_1(G)<\infty$, and we have for any matrix $A\in M_{n,n}(\Z G)$ which becomes invertible over $\D(G)$ that
	\[ \PP\big([r_A\colon \Z G^n\to \Z G^n]\big) \geq 0\]
	in $\P_T(H_1(G)_f)$. We call $G$ \emph{of polytope class} if $\PP\big([r_A\colon \Z G^n\to \Z G^n]\big)$ is even represented by a polytope, i.e., it lies in the submonoid $\PPP_T(H_1(G)_f)\subseteq\P_T(H_1(G)_f)$ of integral polytopes up to translation.
\end{dfn}

\begin{ex}
	\begin{enumerate}
		\item A finitely generated free-abelian group $H$ is of polytope class since the Dieudonné determinant $\det_{\D(H)}(A)$ coincides with the determinant $\det_{\Z H}(A)$ over the commutative ring $\Z H$ and is therefore represented by an element in $\Z H$. Hence $\PP\big([r_A\colon \Z H^n\to \Z H^n]\big)$ is  represented by a polytope.
		\item If $G$ is a torsion-free group satisfying the Atiyah Conjecture such that $H_1(G)_f$ is of rank at most $1$, then $G$ is of polytope class. Namely, let $\D(K)_t[u^\pm]\subset\D(G)$ be a subring determined by a generator of $\Hom(G,\Z)$, as explained in \cref{lem:DG and extensions}. Then it follows by virtue of the Euclidean function on $\D(K)_t[u^\pm] $ given by the degree that $\det_{\D(G)}(A)$ is represented by an element in $\D(K)_t[u^\pm]$. (A similar argument will be used in the proof of \cref{thm:amenable polytope class} where more details can be found.) Thus $\PP\big([r_A\colon \Z G^n\to \Z G^n]\big)$ is represented by an interval.
	\end{enumerate}
\end{ex}

We know from (\ref{eq:hom invariance}) that the $L^2$-torsion polytope is a simple homotopy invariant of free finite $L^2$-acyclic $G$-CW-complexes. This can be strengthened if $G$ is of $P\geq 0$-class.

\begin{lem}\label{lem:polytope class}
	Let $G$ be a group of $P\geq 0$-class. Then the composition
	\[ \Wh(G) \tolabel{\zeta} \Wh^w(G) \tolabel{\PP} \P_T(H_1(G)_f)\]
	is trivial. Moreover, the $L^2$-torsion polytope is a homotopy invariant of free finite $L^2$-acyclic $G$-CW-complexes.
\end{lem}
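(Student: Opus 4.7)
My plan is to reduce the first assertion to a cancellation statement about polytopes under the Minkowski sum, and then deduce the homotopy invariance directly from equation~(\ref{eq:hom invariance}).

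Every element of $\Wh(G)$ is represented by some $A\in GL_n(\Z G)$, and the composition in question sends its class to $\PP([r_A])\in\P_T(H_1(G)_f)$. So the task reduces to showing $\PP([r_A])=0$ for each invertible $A$. Since $A^{-1}$ also has entries in $\Z G$ and becomes invertible over $\D(G)$, the $P\geq 0$-class hypothesis forces both $\PP([r_A])\geq 0$ and $\PP([r_{A^{-1}}])\geq 0$ in $\P_T(H_1(G)_f)$. On the other hand, the composition law in $K_1^w(\Z G)$ combined with $r_A\circ r_{A^{-1}} = r_I$ gives $[r_A]+[r_{A^{-1}}]=0$, so their polytope images sum to zero.

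Everything then hinges on the following polytope lemma, which I regard as the only real step: \emph{if $x,y\in\P_T(H)$ satisfy $x,y\geq 0$ and $x+y=0$, then $x=y=0$}. To prove it I would choose representatives $x=[P_1-Q_1]$ and $y=[P_2-Q_2]$ with literal containments $Q_i\subset P_i$ in $\PPP(H)$ (possible after replacing each $Q_i$ by a suitable translate, using $x,y\geq 0$ in $\P_T(H)$). The vanishing $x+y=0$ in $\P_T(H)$ then translates into an equality $P_1+P_2 = Q_1+Q_2+h$ in $\PPP(H)$ for some $h\in H$. Combined with the containments above, this gives
\[Q_1+Q_2\;\subset\;P_1+Q_2\;\subset\;P_1+P_2\;=\;(Q_1+Q_2)+h,\]
so the bounded polytope $Q_1+Q_2$ is contained in its own $h$-translate, which forces $h=0$. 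Hence $P_1+P_2=Q_1+Q_2$ literally, and the cancellativity of $\PPP(H)$ already invoked in the construction of $\P(H)$ yields $P_1=Q_1$ and $P_2=Q_2$, so $x=y=0$.

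The homotopy invariance is then immediate: for a $G$-homotopy equivalence $f\colon X\to Y$ between finite free $L^2$-acyclic $G$-CW-complexes, equation~(\ref{eq:hom invariance}) reads $\tor_u(Y;\N(G))-\tor_u(X;\N(G))=\zeta(\tau(f))$; applying $\PP$ and invoking the first part kills the right-hand side, so $P(X;G)=P(Y;G)$. The only genuine obstacle is the polytope lemma; everything else is bookkeeping with the definitions.
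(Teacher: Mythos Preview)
Your proof is correct and follows essentially the same route as the paper: use that $A^{-1}\in M_n(\Z G)$, apply the $P\geq 0$ hypothesis to both $[r_A]$ and $[r_{A^{-1}}]$, and conclude from $\PP([r_A])+\PP([r_{A^{-1}}])=0$ that each summand vanishes, then invoke (\ref{eq:hom invariance}) for the homotopy invariance. The only difference is that the paper tacitly uses antisymmetry of the partial order (so $0\leq x$ and $x\leq 0$ force $x=0$), whereas you unpack this into an explicit polytope argument; your version is fine, though in the final cancellation step you should note that the chain $Q_1+Q_2\subset P_1+Q_2\subset P_1+P_2=Q_1+Q_2$ collapses to equalities before invoking cancellativity.
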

\begin{proof}
	An element in the image of $\zeta$ is of the form $[r_A\colon \zg^n\to\zg^n]$ for a matrix $A\in M_{n,n}(\zg)$ which has an inverse $A^{-1}\in M_{n,n}(\zg)$. Since $G$ is of $P\geq 0$-class, we have
	\[ 0 = \PP([\id]) = \PP\big([r_A]\big) + \PP\big([r_{A^{-1}}]\big) \geq 0,\]
	and hence $\PP([r_A]) = 0$. The 'moreover' part immediately follows from this because of (\ref{eq:hom invariance}).
\end{proof}

\begin{remark}[Extension of $P(G)$ to groups of $P\geq 0$-class]
	\cref{lem:polytope class} allows us to drop $\Wh(G) = 0$ from the list of conditions in the definition of the $L^2$-torsion polytope $P(G)$ of groups (see \cref{def:l2 polytope}), provided that $G$ is of $P\geq 0$-class. Put differently, we can extend the definition of $P(G)$ to groups $G$ which are of type $F$ and of $P\geq 0$-class. We will take this into account in the formulations for the rest of this paper.
\end{remark}

\section{Polytope class and amenability}\label{ch:pol class and amenabl}

The goal of this section is to prove the following result.

\begin{thm}[Polytope class and amenability]\label{thm:amenable polytope class}
	Let $G$ be a torsion-free amenable group satisfying the Atiyah Conjecture such that $H_1(G)_f$ is finitely generated. Then $G$ is of polytope class.
\end{thm}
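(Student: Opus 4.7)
I plan to prove the theorem by induction on $d = \rk H$ where $H := H_1(G)_f$, exploiting the tower of Ore localizations provided by Lemma~\ref{lem:DG and extensions}. For the base case $d=0$ the integral polytope group $\P_T(H)$ is trivial, so the claim is vacuous. For $d=1$ the proof is essentially the second half of the example immediately preceding this section: the twisted Laurent polynomial ring $\D(K)_t[u^\pm]$ is a noncommutative Euclidean domain (with Euclidean norm given by the degree in $u$), so any $A \in M_{n,n}(\Z G)$ invertible over $\D(G)$ can be brought by elementary row operations and permutations within $\D(K)_t[u^\pm]$ to upper triangular form with diagonal entries $d_1,\dots,d_n \in \D(K)_t[u^\pm] = \D(K)*H$. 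Their product represents $\det_{\D(G)}(A)$ in $\D(G)^\times_{\ab}$, and its image under $\PP$ is the convex hull of its support, an honest integral polytope.

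For the inductive step $d\ge 2$, I would choose a decomposition $H = H' \oplus \Z$ with $\rk H' = d-1$ and set $G' = p^{-1}(H')$ where $p\colon G \to H$ is the projection; by Lemma~\ref{lem:DG and extensions} the group $G'$ is still torsion-free, amenable and satisfies the Atiyah Conjecture, and $\D(G)$ is the Ore localization of the noncommutative PID $\D(G')_t[u^\pm]$. Euclidean row reduction of $A$ over this PID yields diagonals $d_1,\dots,d_n \in \D(G')_t[u^\pm]$ with
\[
[\det_{\D(G)}(A)] \;=\; \pm\sum_{i=1}^n [d_i] \;\in\; \D(G)^\times_{\ab}.
\]
Writing $d_i = \sum_k c_{i,k} u^k$ with $c_{i,k} \in \D(G')$ and invoking the Ore condition on $\D(K)*H'$, I pick $\alpha_i \in \D(K)*H' \setminus \{0\}$ with $\alpha_i d_i \in \D(K)*H$. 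Setting $s := \prod_i \alpha_i d_i \in \D(K)*H$ and $\alpha := \prod_i \alpha_i \in \D(K)*H'$, this gives
\[
[\det_{\D(G)}(A)] = [s] - [\alpha] \in \D(G)^\times_{\ab}, \qquad\text{hence}\qquad \PP([r_A]) = P'(s) - P'(\alpha) \in \P(H).
\]

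To conclude that this formal difference lies in $\PPP_T(H)$, I would show that $P'(\alpha)$ is a Minkowski summand of $P'(s)$, up to a translation in $H$; the quotient polytope then represents $\PP([r_A])$ in $\PPP_T(H)$. The crucial point is that the $\alpha_i$ are not arbitrary: they are the specific denominators forced by Euclidean division, and each denominator factor introduced at step $i$ pairs, in the abelianization, with a matching numerator factor coming from an earlier pivot $d_j$. A careful sample computation (for example, the case of the discrete Heisenberg group with a two-by-two matrix) confirms that the $d_1$-type contributions to $\alpha$ cancel against the corresponding $d_1$-factor in $s$, leaving a "reduced" denominator whose polytope really is a Minkowski summand.

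\emph{Main obstacle.} The hard part is implementing this bookkeeping in general: the $\alpha_i$ and $d_j$ live in the noncommutative ring $\D(G')_t[u^\pm]$, where they do not commute, so one cannot simply factor $s = \prod_i(\alpha_i d_i)$ as $\alpha \cdot \prod_i d_i$. The required cancellations only hold in $\D(G)^\times_{\ab}$ (where conjugate elements are identified), and one must verify that after these abelian cancellations the remaining "honest" Ore denominators assemble into a polytope that is Minkowski-dominated by $P'(s)$. This requires an inductive argument that reaches down the tower $K \subset G_1 \subset \dots \subset G_{d-1} \subset G$ supplied by iterating Lemma~\ref{lem:DG and extensions}, controlling at each step how clearing Ore denominators interacts with the noncommutative multiplication. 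This telescoping of denominators against pivots is the heart of the proof and the place where the amenability of $G$ (beyond the mere existence of the Ore skew-field $\D(G) = T^{-1}\Q G$) is exploited.
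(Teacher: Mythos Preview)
Your setup via Euclidean reduction over $\D(G')_t[u^\pm]$ is exactly how the paper begins, but there is a genuine gap: the claim that $P'(\alpha)$ is a Minkowski summand of $P'(s)$ is never proved. You yourself flag this as the ``main obstacle'' and offer only a sample computation and a vague promise of bookkeeping in the noncommutative tower. The proposed induction on $d$ is also not really an induction --- the inductive hypothesis for $G'$ is never invoked; you only use that $\D(G')$ is the Ore localization of $\D(K)*H'$, which holds for every $d$ simultaneously. There is no mechanism in your argument that forces the cancellation you want, and it is not clear that the ``telescoping of denominators against pivots'' can be made precise: the Euclidean algorithm over $\D(G')_t[u^\pm]$ introduces denominators from $\D(G')$ in an essentially uncontrolled way, and nothing in your outline explains why, after passing to $\D(G)^\times_{\ab}$, the surviving denominator polytope should sit inside the numerator polytope.

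The paper sidesteps this obstacle entirely. Rather than fixing a single splitting $H = H'\oplus\Z$ and trying to control the denominator, it runs the Euclidean reduction for \emph{every} epimorphism $\phi\colon H\to\Z$. For each such $\phi$ one obtains $\PP([r_A]) = P(a) - P(b) - P(y)$ with $b,y$ supported on $\ker\bar\phi$, so $\PP([r_A])\in\P_T(H,\ker\bar\phi)$. The real work is then purely polytope-theoretic: the paper proves (Lemma~\ref{lem:polytope}) that an element of $\P_T(H)$ all of whose faces $F_\phi$ are represented by polytopes is itself a polytope, and deduces (Lemma~\ref{lem:intersection}) that $\bigcap_\phi \P_T(H,\ker\phi) = \PPP_T(H)$. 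This face-detection argument replaces the direct Minkowski-summand verification you are attempting, and is where the genuine content lies.
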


Its proof requires some preparation. Our main technical tool going into the proof are face maps.

\begin{dfn}[Faces and face maps]
	Let $H$ be a finitely generated free-abelian group and $P\subset V_H = H\otimes_\Z\R$ an integral polytope. Take $\phi\in\Hom(H,\Z)$ which we also view as an element in $\Hom(H,\R)=\Hom_\R(V_H,\R)$. Then we call
	\[ F_\phi(P) = \{p\in P\mid \phi(p) = \max\{ \phi(q)\mid q\in P\} \}\]
	the \emph{face of $P$ in $\phi$-direction}, see also \cref{fig:partition}. A subset $F\subseteq P$ is called a \emph{face} if $F_\phi(P) = F$ for some $\phi\in\Hom(H,\Z)$.
	
	A face of an integral polytope is an integral polytope in its own right, and it is straightforward to check that $F_\phi(P+Q) = F_\phi(P)+F_\phi(Q)$. These two observations imply that we obtain a homomorphism
	\begin{equation*} 
	F_\phi\colon \P(H)\to \P(H), \;\; P\mapsto F_\phi(P)
	\end{equation*}
	that we call \emph{face map in $\phi$-direction}. There is an induced face map (denoted by the same symbol)
	\begin{equation*} 
	F_\phi\colon \P_T(H)\to \P_T(H)
	\end{equation*}
	whose image is contained in the subgroup $\P_T(\ker \phi)$.
\end{dfn} 

	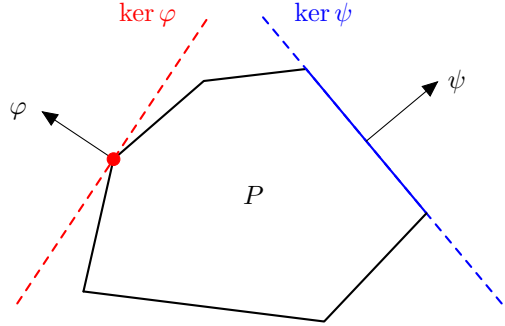
\begin{figure}[h]
	\captionsetup{width=0.7\textwidth}
	\centering
	\begin{tikzpicture}[line cap=round,line join=round,>=triangle 45,x=1.0cm,y=1.0cm, scale = 0.8]
	\clip(-2.5, -1.6) rectangle (6.5,4.2);
	\draw (1.5, 0.6) node[right = 0pt] {$P$} (1,0);
	\draw[thick]  (-1,-1) -- (-0.5, 1.2) -- (1, 2.5) -- (2.7, 2.7);
	\draw[thick, blue]  (2.7, 2.7) -- (4.7, 0.3);
	\draw[thick, blue, dashed]  (5.95, -1.2) -- (2.7, 2.7) -- (1.95, 3.6) node[right = 9pt] {$\ker\psi$} (1,0);
	\draw[thick]  (4.7, 0.3) -- (3, -1.5) -- (-1,-1);
	\draw[->]  (3.7, 1.5) -- (4.9, 2.5);
	\draw (4.9, 2.5) node[right = 0pt] {$\psi$} (1,0);
	
	\draw[->]  (-0.5, 1.2) -- (-1.7, 2.);
	\filldraw[red] (-0.5, 1.2) circle (3pt);
	\draw (-1.7, 2.) node[left= 2pt] {$\phi$} (1,0);
	\draw[thick, red, dashed]  (-2.1, -1.2) --  (1.1, 3.6) node[left = 9pt] {$\ker\phi$} (1,0);
	\end{tikzpicture}
	\caption{A polytope $P$ and two morphisms $\phi$ and $\psi$ indicated by (translates of) their kernels and the directions in which they maximize. Here $F_\phi(P)$ is represented by the red vertex and $F_\psi(P)$ is represented by the blue edge.}\label{fig:partition}
\end{figure}

The first lemma is possibly well-known in polytope theory, but we were not able to find the statement nor an implicit proof in the literature. In any case, it might be helpful in other situations.

\begin{lem}[Detecting polytopes by their faces]\label{lem:polytope}
	Let $H$ be a finitely generated free-abelian group of rank at least $2$. Then $x\in \P(H)$ is represented by a polytope if and only if for every $\phi\in\Hom(H,\Z)$ the class $F_\phi(x)\in\P(H)$ is represented by a polytope.
\end{lem}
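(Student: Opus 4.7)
The plan is to invoke the classical correspondence between polytopes and their support functions. The \emph{only if} direction is immediate: if $x = [R]$ for some $R \in \PPP(H)$, then $F_\phi(x) = [F_\phi(R)]$, which is a polytope.

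For the converse, write $x = P - Q$ with $P, Q \in \PPP(H)$. Since $\PPP(H)$ is cancellative, $x$ being represented by a polytope is equivalent to $Q$ being a Minkowski summand of $P$, i.e.\ to the existence of $R \in \PPP(H)$ with $P = R + Q$. The hypothesis yields, for every $\phi \in \Hom(H, \Z)$, an integral polytope $R_\phi$ satisfying $F_\phi(P) = R_\phi + F_\phi(Q)$. Consider the support function $h_P \colon V_H^* \to \R$, $h_P(\phi) = \max_{p \in P} \phi(p)$, which is piecewise linear, positively homogeneous, and sublinear, and satisfies $h_{P+Q} = h_P + h_Q$. Consequently, $Q$ is a Minkowski summand of $P$ if and only if $g := h_P - h_Q$ is convex, since $g$ is already continuous, piecewise linear, and positively homogeneous.

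Convexity of $g$ is a local condition verifiable across each codimension-one wall of the common refinement of the normal fans of $P$ and $Q$. Each such wall is defined by rational equations and therefore contains integral vectors $\phi$ in its relative interior; for such $\phi$ the face $F_\phi(P+Q) = F_\phi(P) + F_\phi(Q)$ is an edge, so $F_\phi(P)$ and $F_\phi(Q)$ are parallel segments (one possibly a point). A direct unwinding of the relation $F_\phi(P) = R_\phi + F_\phi(Q)$ along the common edge direction produces precisely the length inequality encoding the convexity of $g$ across the wall.

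Hence $g$ is convex and equals the support function of some polytope $R$ with $P = R + Q$. To see that $R$ is integral, apply the hypothesis at $\phi$ in the interior of a maximal cone of the normal fan of $P+Q$: there $F_\phi(P)$ and $F_\phi(Q)$ are vertices and $R_\phi$ is forced to be a singleton in $H$; these singletons exhaust the vertices of $R$. The rank $\geq 2$ hypothesis enters because in rank $1$ the only codim-one wall of the fan is $\{0\}$, where the hypothesis degenerates to $F_0(x)=x$ and gives no new information. The main delicate point will be the bookkeeping of the convexity inequality at each wall, which ultimately reduces to a one-dimensional calculation along the corresponding edge direction.
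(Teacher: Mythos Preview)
Your approach is correct and genuinely different from the paper's. You reduce the problem to showing that the difference of support functions $g = h_P - h_Q$ is convex, and then invoke the local-to-global principle for piecewise-linear convexity: $g$ is linear on the maximal cones of the common refinement $\Sigma$ of the normal fans of $P$ and $Q$, and global convexity is equivalent to convexity across each codimension-one wall. At a wall with a rational interior point $\phi$, both $F_\phi(P)$ and $F_\phi(Q)$ are (possibly degenerate) parallel segments, and the hypothesis $F_\phi(P)=R_\phi+F_\phi(Q)$ with $R_\phi$ a genuine polytope forces the length inequality $\ell_\phi(P)\geq\ell_\phi(Q)$, which is exactly the convexity inequality for $g$ across that wall. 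Integrality of the resulting summand $R$ follows immediately since each vertex of $R$ equals $p_\sigma-q_\sigma$ for vertices $p_\sigma,q_\sigma\in H$ associated with the maximal cones of $\Sigma$.

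By contrast, the paper proceeds constructively: it writes $P=\{\psi_i\leq c_i\}$, sets $S=\hull\bigl(\bigcup_i S^{\psi_i}\bigr)$ with $S^{\psi_i}=F_{\psi_i}(P)-F_{\psi_i}(Q)$, and then proves $Q+S=P$ by direct inclusion arguments. The technical heart is showing that the a priori unrelated polytopes $S^\phi$ satisfy $S^\phi=F_\phi(S)$; this is done via a sequence of intersection identities and a geodesic argument on the sphere that compares $\phi(S^\psi)$ along a chain of adjacent facet normals. Your support-function argument packages the same geometric content more succinctly: the chain-of-adjacent-cones argument in the paper's Step~3 is essentially the PL local-to-global convexity principle applied along a path in the fan, while the intersection identity in Step~1 is implicit in the additivity of support functions. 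The trade-off is that your route assumes (or must spell out) the local-to-global statement for PL convex functions on complete fans, whereas the paper's argument is entirely self-contained. Your remark about rank~$1$ is slightly off: there the only wall is $\{0\}$, and the hypothesis at $\phi=0$ reads $F_0(x)=x$ is a polytope, so the lemma is trivially true (hypothesis equals conclusion) rather than failing.
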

\begin{proof}		
	It suffices to prove this for $H = \Z^n$. Equip $V_H = \R^n$ with the standard inner product. The forward direction of the lemma is obvious. 
	
	For the backwards direction write $x = P-Q$ for integral polytopes $P$ and $Q$. By assumption $F_\phi(x) = F_\phi(P) - F_\phi(Q)$ is an integral polytope for any $\phi\in \Hom(H,\Z)$, say $S^\phi$, so $F_\phi(P) = F_\phi(Q)+S^\phi$. We can write 
	\[P = \{x\in V_H\mid \psi_i(x)\leq c_i\}\] 
	for certain $\psi_i\in\Hom(H,\Z)\subset \Hom_\R(V_H,\R)$ and $c_i\in\Z$ ($i = 1,...,k$). Then 
	\[S = \hull\left(\bigcup_{i=1}^k S^{\psi_i}\right)\] 
	is an integral polytope satisfying $P\subset Q+S$. The remainder of the proof will be occupied with proving $Q+S \subset P$ which will imply $x = P-Q = S$. This requires a number of steps. In the following, Greek letters will always denote elements in $\Hom(H,\Z)$ without explicitly saying this. Moreover, given a compact subset $A\subset V_H$ and $\phi$, we will use the shorthand notations
	\begin{gather*}
	A_\phi = F_\phi(A); \\
	\phi(A) = \max\{\phi(a)\mid a\in A\}.
	\end{gather*}
	First note that	we have for any $\phi$ and $\psi$
	\[ F_\phi(P_\psi) = P_\phi \cap P_\psi = F_\psi(P_\phi)\]
	provided that the intersection in the middle is non-trivial, and likewise for $Q$.
	
	\medskip
	\emph{Step 1:} If $\phi, \psi$ are such that $P_\phi\cap P_\psi$ is non-empty, then $Q_\phi\cap Q_\psi$ and $S^\phi\cap S^\psi$ are non-empty, and we have
	\[ P_\phi\cap P_\psi = \big(Q_\phi\cap Q_\psi \big) + \big(S^\phi\cap S^\psi \big).\]
	
	We first argue that $Q_\phi\cap Q_\psi$ is non-empty. Pick a vertex $p\in P_\phi\cap P_\psi$, and let $\alpha$ be such that $P_\alpha = p$. Then $p = P_\alpha = Q_\alpha + S^\alpha$, hence $Q_\alpha = q$ and $S^\alpha = s$ are just points. After translating $Q$, we may assume that $s= 0$ and $p=q$. Then for every $\beta$ such that $P_\beta$ contains $p$ we have $Q_\beta\subset P_\beta$ and $p\in Q_\beta$. This applies in particular to $\phi$ and $\psi$, hence $p\in Q_\phi\cap Q_\psi$.
	
	Now we compute
	\[ F_\phi(S^\psi) = F_\phi(P_\psi) - F_\phi(Q_\psi)  = F_\psi(P_\phi) - F_\psi(Q_\phi)  = F_\psi(S^\phi),\]
	hence $F_\phi(S^\psi) \subset S^\phi\cap S^\psi$ and $S^\phi\cap S^\psi$ is non-empty. We also have
	\begin{align*} 
	\big(S^\phi\cap S^\psi\big) + F_\phi(Q_\psi) &= 	\big(S^\phi\cap S^\psi\big) + \big(Q_\phi\cap Q_\psi\big)\\
	&\subset	\big(P_\phi\cap P_\psi\big) \\
	&= 	F_\phi(P_\psi).
	\end{align*}
	From this it follows that $S^\phi\cap S^\psi \subset F_\phi(S^\psi)$. Thus we proved $ F_\phi(S^\psi) = S^\phi\cap S^\psi$. Now we conclude 
	\begin{align*}
	P_\phi\cap P_\psi &= F_\phi(P_\psi)\\
	&= F_\phi(Q_\psi) + F_\phi(S^\psi)\\
	&= \big(Q_\phi\cap Q_\psi\big) + \big(S^\phi\cap S^\psi\big). 
	\end{align*}
	
	\medskip
	\emph{Step 2:} Let $v_0,v_1,v_2\in S^{n-1} \subset \R^n$ be such that $v_1$ lies on a geodesic path of length at most $\pi$ from $v_0$ to $v_2$ in $S^{n-1}$. Write $\phi_i = \langle v_i, \cdot\rangle\colon \R^n\to \R$. If $P$ is any polytope such that $P_{\phi_1} \cap P_{\phi_2}$ is non-trivial, then we have
	\[ \phi_0(P_{\phi_2}) = \phi_0(P_{\phi_1} \cap P_{\phi_2}).\]
	
	Pick an element $x\in P_{\phi_1} \cap P_{\phi_2}$ attaining the maximum on the right. Assume that we have 
	\[ \phi_0(P_{\phi_2}) > \phi_0(P_{\phi_1} \cap P_{\phi_2}).\]
	Then there exists $y\in P_{\phi_2}$ such that $	\phi_0(y) > \phi_0(x)$, $\phi_1(y) < \phi_1(x)$, and $\phi_2(y) = \phi_2(x)$. In other words,
	\begin{gather*}
		\langle y-x, v_0\rangle > 0;\\
		\langle y-x, v_1\rangle < 0;\\
		\langle y-x, v_2\rangle = 0
	\end{gather*}
	which cannot happen if $v_1$ lies on a geodesic path of length at most $\pi$ from $v_0$ to $v_2$.
	
	\medskip
	\emph{Step 3:} We have $S^\phi = S_\phi$.
	
	Let $\phi, \psi$ be arbitrary and write (up to scalar) $\phi = \langle v, \cdot \rangle$ and $\psi = \langle w, \cdot \rangle$ for unit vectors $v,w$. There is a sequence of unit vectors $v = v_0, v_1, ..., v_m = w$ running along a geodesic path of length at most $\pi$ from $v$ to $w$ in $S^{n-1}$ such that $P_{\phi_i}\cap P_{\phi_{i+1}}$ is non-trivial. For brevity write from now on $P_i=P_{\phi_i}$, $Q_i = Q_{\phi_i}$, and $S^i = S^{\phi_i}$. Then we have by Step 1 
	\begin{gather*}
	P_i\cap P_{i+1} = \big(Q_i\cap Q_{i+1} \big)+\big(S^i\cap S^{i+1}\big)
	\end{gather*}
	and by Step 2
	\begin{gather*}
	\phi(P_{i+1}) = \phi(P_i\cap P_{i+1});\\
	\phi(Q_{i+1}) = \phi(Q_i\cap Q_{i+1}).
	\end{gather*}
	This implies 
	\begin{align*}
		\phi(S^{i+1})  &= \phi(P_{i+1}) - \phi(Q_{i+1})\\
				&= \phi(P_i\cap P_{i+1}) -\phi(Q_i\cap Q_{i+1})\\
				&= \phi(S^i\cap S^{i+1}) \\
				&\leq \phi(S^i).
	\end{align*}
	Since this is true for all $i = 0,..., m-1$, we conclude $\phi(S^\psi) \leq \phi(S^\phi)$ and hence $S^\phi = S_\phi$.
	
	\medskip
	\emph{Step 4:} We have $Q+ S\subset P = \{x\in V_H\mid \psi_i(x)\leq c_i\}$. 
	
	Pick arbitrary $q\in Q$ and $s\in S$. With the aid of Step 3 we can calculate
	\begin{align*} 
	\psi_i(q+s) &= \psi_i(q)+\psi_i(s) \\
	& \leq \psi_i(Q_{\psi_i}) + \psi_i(S_{\psi_i}) \\
	& = \psi_i(Q_{\psi_i}) + \psi_i(S^{\psi_i}) \\
	& = \psi_i(P_{\psi_i}) = c_i
	\end{align*}
	for all $i$, and hence $q+s \in P$.
\end{proof}

We also need the following auxiliary gadget.

\begin{dfn}
	Let $H$ be a finitely generated free-abelian group and $G\subset H$ a subgroup. We consider $\PPP_T(G)$ as a submonoid of $\PPP_T(H)$. Then we let $\P_T(H,G)$ be the submonoid of $\P_T(H)$ containing all elements that can be written as a difference $P-Q$ for some $P\in \PPP_T(H)$ and $Q\in \PPP_T(G)$.
\end{dfn}

\begin{ex}
	\begin{enumerate}
		\item For any subgroup $G\subset H$ one has 
		\[ \PPP_T(H) = \P_T(H,0)\subset \P_T(H,G)\subset \P_T(H,H) = \P_T(H).\]
		We can interpret $\P_T(H,G)$ as interpolating between the monoid of integral polytopes and the integral polytope group.
		\item Let $H$ be of rank $2$ and let $G_1, G_2$ be two subgroups of rank $1$. If $G_i\cap G_j = 0$, then $\P_T(H,G_1)\cap \P_T(H,G_2) = \PPP_T(H)$.
	\end{enumerate}
\end{ex}

Motivated by the last example we propose the following problem.

\begin{quest}
	Let $H$ be a finitely generated free-abelian group and $G_1, G_2$ be two subgroups. Do we always have 
	\[\P_T(H,G_1)\cap \P_T(H,G_2) = \P_T(H, G_1\cap G_2)?\]
\end{quest}

If this question has an affirmative answer, then the next lemma, for which we provide a different argument, would immediately follow.

\begin{lem}\label{lem:intersection}
	Let $H$ be a finitely generated free-abelian group. Then 
	\[\bigcap_{\phi\in\Hom(H, \Z)} \P_T(H, \ker \phi) = \PPP_T(H).\]
\end{lem}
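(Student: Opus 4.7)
The inclusion $\PPP_T(H) \subseteq \bigcap_\phi \P_T(H,\ker\phi)$ is immediate, since $\PPP_T(H) = \P_T(H,0)$ and $0\subseteq \ker\phi$ for every $\phi$. For the reverse inclusion, the plan is to induct on $n = \rank H$. The base cases $n\leq 1$ are immediate: any nonzero $\phi\in\Hom(H,\Z)$ already has $\ker\phi = 0$, so $\P_T(H,\ker\phi) = \PPP_T(H)$ and the intersection collapses trivially.

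For the inductive step ($n\geq 2$), I will take $x$ in the intersection and apply Lemma~\ref{lem:polytope} to reduce the problem to showing that $F_\psi(x)$ is represented by a polytope for every nonzero $\psi\in\Hom(H,\Z)$. Fixing such $\psi$, let $K = \ker\psi$, which is a direct summand of $H$ of rank $n-1$; I will choose a splitting $H = K\oplus L$. The strategy is then to verify that $F_\psi(x)\in \bigcap_{\alpha\in\Hom(K,\Z)} \P_T(K,\ker\alpha)$, so that the inductive hypothesis applied to $K$ gives $F_\psi(x)\in\PPP_T(K)\subseteq \PPP_T(H)$.

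To carry this out, fix $\alpha\in\Hom(K,\Z)$ and extend it via the chosen splitting to $\tilde\alpha\in\Hom(H,\Z)$ with $\tilde\alpha|_L = 0$, so that $\ker\tilde\alpha \cap K = \ker\alpha$. By hypothesis I can write $x = P - Q$ with $P\in\PPP_T(H)$ and $Q\in\PPP_T(\ker\tilde\alpha)$, hence $F_\psi(x) = F_\psi(P) - F_\psi(Q)$. The polytope $F_\psi(P)$ is a face of $P$, so it lies in an affine translate of $\ker\psi = K$ and thus represents an element of $\PPP_T(K)$. Simultaneously, $F_\psi(Q)$ is a face of a polytope already contained in $\ker\tilde\alpha$, so it lies in $\ker\tilde\alpha$ and in a translate of $K$; after passing to $\P_T$ this puts it in $\PPP_T(\ker\tilde\alpha\cap K) = \PPP_T(\ker\alpha)$. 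Therefore $F_\psi(x)\in\P_T(K,\ker\alpha)$, as needed.

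The main obstacle I anticipate is the careful bookkeeping of translations in $\P_T$ versus $\P$: the face $F_\psi(P)$ sits only in an affine subspace \emph{parallel} to $\ker\psi$, not in $\ker\psi$ itself, and one must use that this distinction is invisible in the translation quotient so that the induced face map genuinely takes values in $\P_T(\ker\psi)$. Once this is handled, the argument becomes a clean combination of Lemma~\ref{lem:polytope} and the inductive hypothesis.
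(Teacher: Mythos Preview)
Your proposal is correct and follows essentially the same route as the paper: induct on $\rank H$, and in the inductive step use Lemma~\ref{lem:polytope} to reduce to showing that each face $F_\psi(x)$ lies in the corresponding intersection for the rank-$(n-1)$ group $\ker\psi$. The only cosmetic difference is that you explicitly construct the extension $\tilde\alpha$ to realize each $\ker\alpha\subseteq K$ as $K\cap\ker\tilde\alpha$, whereas the paper phrases the same step as the equality $\bigcap_{\phi\in\Hom(H,\Z)}\P_T(\ker\psi,\ker\psi\cap\ker\phi)=\bigcap_{\alpha\in\Hom(\ker\psi,\Z)}\P_T(\ker\psi,\ker\alpha)$.
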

\begin{proof}
	We prove the statement by induction on the rank of $H$. The rank $1$ case is obvious.
	
	For the higher rank case, pick an element $x$ in the above intersection. For any homomorphism $\phi\colon H\to\Z$ we can find $P_\phi\in \PPP_T(H)$ and $Q_\phi\in \PPP_T(\ker\phi)$ such that $x = P_\phi-Q_\phi$. Fix some homomorphism $\alpha\colon H\to\Z$. Then
	\[ F_\alpha(x) = F_\alpha( P_\phi) - F_\alpha(Q_\phi) \in \P_T(\ker\alpha, \ker\alpha\cap \ker\phi).\]
	Since $\phi$ was arbitrary, we conclude
	\[ F_\alpha(x) \in \bigcap_{\phi\in\Hom(H,\Z)} \P_T(\ker\alpha, \ker\alpha\cap \ker\phi) = \bigcap_{\psi\in\Hom(\ker\alpha,\Z)} \P_T(\ker\alpha, \ker\psi)  .\]
	From the induction hypothesis we conclude $F_\alpha(x) \in \PPP_T(\ker\alpha)$. As this holds for every homomorphism $\alpha\colon H\to \Z$, we may apply the previous \cref{lem:polytope} to deduce that $x\in \PPP_T(H)$.
\end{proof}

Now we can tackle the main result of this section.

\begin{proof}[Proof of \cref{thm:amenable polytope class}]
	Recall from \cref{D:amenable} that $\Z G$ satisfies the Ore condition with respect to $T= \Z G\s-\{0\}$ and the inclusion induces an isomorphism $T^{-1} \Z G\tolabel{\cong} \D(G)$.
	
	Let $A\in M_{n,n}(\Z G)$ be a matrix which becomes invertible over $\D(G)$. If $H_1(G)_f = 0$, then there is nothing to prove. Otherwise let us fix some epimorphism $\phi\colon G\to \Z$ and denote its kernel by $K$. Consider the associated twisted Laurent polynomial ring $\D(K)_t[u^\pm]\subset \D(G)$ as in \cref{lem:DG and extensions}. The Euclidean function on $\D(K)_t[u^\pm]$ given by the degree allows us to transform $A$ to a triangular matrix $T$ over $\D(K)_t[u^\pm]$ by using the operations
	\begin{itemize}
		\setlength\itemsep{0mm}
		\item Permute rows or columns;
		\item Multiply a row on the right or a column on the left with an element of the form $y\cdot u^m$ for some non-trivial $y\in \D(K)$ and $m\in\Z$;
		\item Add a right $\D(K)_t[u^\pm]$-multiple of one row (resp. column) to another row (resp. column).
	\end{itemize}
	These operations change the class $[A] \in K_1(\D(G))$ by adding an element of the form $[y\cdot u^m]$ for some non-trivial $y\in \D(K)$ and $m\in\Z$. Since $\D(K) = (\Z K\s-\{0\})^{-1} \Z K$, we may then multiply $T$ with suitable elements in $\Z K$ to obtain a matrix over $\Z K_t[u^\pm] = \Z G$. This implies that there are elements $a\in\zg$ and $b\in \Z K\s-\{0\}$ such that we have in $K_1(\D(G))$
	\[[A] = [T] - [y\cdot u^m] =  [a\cdot b^{-1}] -[y\cdot u^m] .\]
	Since $P(u^m) = 0$ in $\P_T(H_1(G)_f)$, we have
	\[ \PP\big([r_A\colon \Z G^n\to \Z G^n]\big) = P(a) - P(b) - P(y)\in \P_T(H_1(G)_f, \ker\bar{ \phi}) \]
	for the epimorphism $\bar{\phi}\colon H_1(G)_f\to\Z$ induced by $\phi$. Since $\phi$ was arbitrary, we have
	\[ \PP\big([r_A\colon \Z G^n\to \Z G^n]\big) \in\bigcap_{\substack{\phi\in\Hom(G,\Z) \\\text{ surjective}}} \P_T(H_1(G)_f, \ker\bar{ \phi}).  \]
	By \cref{lem:intersection}, this intersection is equal to $\PPP_T(H_1(G)_f)$, and the proof is complete.
\end{proof}

\section{Polytope class and the $L^2$-torsion polytope}\label{ch:pol class and polytopes}

In this section we adapt Wegner's strategy built in \cite{Wegner2000, Wegner2009} to the setting of the $L^2$-torsion polytope. Together with the knowledge that torsion-free amenable groups are of polytope class, one of its applications will be the vanishing of the $L^2$-torsion polytope of every elementary amenable group of type $F$. In order to motivate our first lemma we give a rough idea of the argument:

Instead of localizing the group ring $\zg$ at $\zg\s-\{0\}$ in order to obtain $\D(G)$, we localize at a much smaller set $S\subset\zg$ in order to obtain an intermediate ring $\zg\subset S^{-1}\zg\subset \D(G)$. This set is small enough so that the polytope of invertible matrices over $S^{-1}\zg$ still satisfies $P\geq 0$, but it is large enough so that the localized cellular chain complex $S^{-1}C_*(EG)$ is already contractible. Combining these two facts makes the image of the Whitehead torsion of $S^{-1}C_*(EG)$ under an adjusted polytope homomorphism $K_1(S^{-1}\zg)\to \P_T(H_1(G)_f)$ computable. But this image coincides with the negative of the $L^2$-torsion polytope $P(G)$.

It is worthwhile mentioning that this kind of partial Ore localization technique was used for the first time by Rosset \cite{Rosset1984} in proving that the Euler characteristic of a group of type $F$ vanishes provided that it contains a non-trivial normal abelian subgroup.

\begin{lem}\label{lem:ore}
	Let $G$ be a group of type $F$ which satisfies the Atiyah Conjecture and $b_1(G)<\infty$. Suppose that $G$ contains a non-trivial abelian normal subgroup $A\subset G$ such that $A\cap\ker(\pr\colon G\to H_1(G)_f)\neq 0$. Then
	\[ S = \{ x\in \Z A\s-\{0\}\mid P(x)= 0\;\text{ in }\; \P_T(H_1(G)_f) \}.\]
	is a multiplicatively closed subset with respect to which $\Z G$ satisfies the Ore condition and such that $S^{-1}\Z = 0$ for the trivial $\Z G$-module $\Z$.
\end{lem}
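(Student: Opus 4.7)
The plan is to unpack $S$ concretely and then verify the three claims in turn. Setting $K = \ker(\pr\colon G \to H_1(G)_f)$, an element $x \in \Z A \setminus \{0\}$ lies in $S$ precisely when $\supp(x) \subset A$ maps to a single point of $H_1(G)_f$, i.e.\ when $\supp(x)$ is contained in a single coset of $A \cap K$ in $A$. Multiplicative closure is then immediate from the fact that $P\colon \D(G)^{\times}_{\ab} \to \P(H_1(G)_f)$ is a group homomorphism: since $A$ is torsion-free abelian, $\Z A$ is a commutative domain, and any non-zero element of $\Z A$ is invertible in the skew-field $\D(G)$, so $P(xy) = P(x) + P(y) = 0$ whenever $x, y \in S$.

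For the Ore condition, the crucial input is that $A$ is normal in $G$: for any $g \in G$ and $s \in \Z A$, conjugation yields $g^{-1} s g \in \Z A$, and since every commutator of $G$ dies in $H_1(G)_f$, conjugation preserves the Newton polytope in $\P(H_1(G)_f)$ itself, and in particular preserves $S$. Given $s \in S$ and $r = \sum_i n_i g_i \in \Z G$, I would set $s_i = g_i^{-1} s g_i \in S$ and $s' = \prod_i s_i \in S$ (well defined because $\Z A$ is commutative and $S$ is multiplicatively closed). The identity $g_i s_i = s g_i$, combined with commutativity of the factors $s_k$ inside $\Z A$, then gives
\[ r s' \;=\; \sum_i n_i g_i \prod_k s_k \;=\; \sum_i n_i (g_i s_i) \prod_{k \neq i} s_k \;=\; s \cdot \Bigl(\sum_i n_i g_i \prod_{k \neq i} s_k\Bigr), \]
so $r s' = s r'$ with $r' \in \Z G$, establishing the right Ore condition; the left Ore condition follows by a symmetric argument. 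This bookkeeping---manufacturing a single $s'$ that simultaneously clears denominators for every term of $r$---is where I expect the main (mild) obstacle to lie.

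Finally, for $S^{-1}\Z = 0$ the hypothesis $A \cap K \neq 0$ does the work. Picking a non-trivial $a \in A \cap K$ and setting $s = a - 1 \in \Z A$, one has $s \neq 0$ because $G$ is torsion-free, and both $a$ and $1$ map to $0 \in H_1(G)_f$, so $P(s) = 0$ in $\P_T(H_1(G)_f)$; hence $s \in S$. The augmentation sends $s$ to $0$, so $s$ acts as $0$ on the trivial $\Z G$-module $\Z$. Ore localization then forces every $n \in \Z$ to vanish in $S^{-1}\Z$, because in $S^{-1}\Z$ the relation $s \cdot n = 0$ in $\Z$ makes $n = 0$; thus $S^{-1}\Z = 0$.
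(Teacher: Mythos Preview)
Your proposal is correct and follows essentially the same route as the paper: multiplicative closure via $P(xy)=P(x)+P(y)$, the Ore condition by conjugating $s$ through each group element appearing in $r$ and taking the product of the conjugates (the paper uses coset representatives of $A\backslash G$ and coefficients in $\Z A$ rather than integer coefficients over all of $G$, and proves the opposite Ore condition, but this is purely cosmetic), and $S^{-1}\Z=0$ via $1-a$ for a non-trivial $a\in A\cap K$. The one tiny slip is that $s=a-1\neq 0$ simply because $a$ is non-trivial, not because $G$ is torsion-free; otherwise everything is in order.
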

\begin{proof}
	Since for any two elements $x,y\in \Z G$ we have $P(x\cdot y) = P(x)+ P(y)$, it is clear that $S$ is multiplicatively closed. The proof for the left and right Ore condition follows as in \cite[Proof of Theorem 5.4.5, Step 2 and 3]{Wegner2000}, see also \cite[Lemma 3.119]{Lueck2002}. We include the argument here for the sake of completeness. Note that the canonical involution on $\zg$ respects $S$, so it suffices to prove the right Ore condition. 
	
	Let $r\in\Z G, s\in S$ and fix a set of representatives $\{g_i\mid i\in I\}$ for the cosets $Ag\in A\backslash G$. Write $r = \sum_{i\in I} a_ig_i$ for certain $a_i\in \Z A$, where almost all $a_i$ vanish. Put $I' = \{i\in I\mid a_i\neq 0\}$. The element $s_i = g_isg_i^{-1}$ lies in $\Z A$ since $A$ is normal and
	$ P(s_i) = P(s) = 0$. These two facts imply $s_i\in S$.
	
	Define $s' = \prod_{i\in I'} s_i \in S$, $x_i = s'/s_i\in S$, and $r' = \sum_{i\in I'} x_ia_ig_i\in\zg$. Then we compute
	\begin{align*}
	s'\cdot r &= \sum_{i\in I'}  s'a_ig_i 
	= \sum_{i\in I'}  x_is_ia_ig_i
	= \sum_{i\in I'} x_ia_is_ig_i \\
	&= \sum_{i\in I'} x_ia_ig_isg_i^{-1}g_i
	= \sum_{i\in I'} x_ia_ig_is
	= r'\cdot s
	\end{align*}
	
	Finally we prove $S^{-1}\Z = 0$. Pick some non-trivial 
	\[a\in A\cap\ker(\pr\colon G\to H_1(G)_f)\neq 0\] 
	(this is the only part where we need this assumption). Then $P(1-a) = 0$ in $\P_T(H_1(G)_f)$, so $1-a$ lies in $S$. Since $1-a$ acts by multiplication with $0$ on $\Z$, we conclude $S^{-1}\Z = 0$.
\end{proof}

\begin{lem}\label{lem:vanishing}
	Let $G$ be a group of $P\geq 0$-class. Let $S\subseteq \Z G$ be a multiplicatively closed subset with respect to which $\Z G$ satisfies the Ore condition. Suppose that $P(s)= 0\text{ in } \P_T(H_1(G)_f)$ for all $s\in S$.
	
	If $X$ is a free finite $L^2$-acyclic $G$-CW-complex such that $S^{-1}H_n(X) = 0$, then 
	\[ P(X;G) = 0.\]
\end{lem}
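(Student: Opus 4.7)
My plan is to factor the computation of $P(X;G)$ through the intermediate ring $S^{-1}\zg$, exploiting the hypothesis that $P$ already vanishes on $S$. First I would observe that Ore localization is flat, so
\[ H_n(S^{-1}C_*(X)) \cong S^{-1}H_n(X) = 0 \]
for every $n$. Hence the finite based free $S^{-1}\zg$-chain complex $S^{-1}C_*(X)$ is acyclic, and being a bounded complex of finitely generated free modules it is contractible. Its classical Whitehead torsion $\tau(S^{-1}C_*(X))\in \widetilde{K}_1(S^{-1}\zg)$ is therefore well-defined.

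By the universal property of Ore localization the inclusion $\zg\hookrightarrow\D(G)$ factors as $\zg \to S^{-1}\zg \to \D(G)$. Composing the induced map on $\widetilde{K}_1$ with the Dieudonn\'e determinant and the homomorphism $P\colon \D(G)^\times_{\ab}\to \P(H_1(G)_f)$ yields a homomorphism $\PP\colon \widetilde{K}_1(S^{-1}\zg)\to\P_T(H_1(G)_f)$, which I denote by the same symbol. Naturality of Whitehead torsion under base change gives $\D(G)\otimes_\zg C_*(X) \cong \D(G)\otimes_{S^{-1}\zg} S^{-1}C_*(X)$ as based free $\D(G)$-complexes, so the image of $\tau(S^{-1}C_*(X))$ in $\widetilde{K}_1(\D(G))$ coincides with $\tau(\D(G)\otimes_\zg C_*(X))$. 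Applying \cref{lem:torsion} and unwinding definitions gives
\[ P(X;G) = -\PP\big(\tor_u(X;\N(G))\big) = -\PP\big(\tau(S^{-1}C_*(X))\big). \]

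It then suffices to show that $\PP$ vanishes on every generator of $\widetilde{K}_1(S^{-1}\zg)$. Given $B \in \GL_n(S^{-1}\zg)$, the right Ore condition lets me clear denominators and write $B = A\cdot(sI_n)^{-1}$ with $A \in M_{n,n}(\zg)$ and $s \in S$. Since $B$ and $sI_n$ are units in $\D(G)$, so is $A = B\cdot sI_n$. The $P\geq 0$-class hypothesis on $G$ yields $\PP([r_A])\geq 0$ in $\P_T(H_1(G)_f)$, while $P(s) = 0$ by assumption, so
\[ \PP([B]) = \PP([r_A]) - n\cdot P(s) = \PP([r_A]) \geq 0. \]
Applying the same reasoning to $B^{-1}\in \GL_n(S^{-1}\zg)$ gives $-\PP([B]) = \PP([B^{-1}])\geq 0$, and antisymmetry of the partial order on $\P_T(H_1(G)_f)$ forces $\PP([B]) = 0$. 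Combined with the identification above this yields $P(X;G) = 0$.

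The main obstacle I anticipate lies in the bookkeeping behind the identification $P(X;G) = -\PP(\tau(S^{-1}C_*(X)))$: the $L^2$-torsion polytope is defined through the universal $L^2$-torsion living in the weak group $\widetilde{K}_1^w(\zg)$, whereas the natural home for the Whitehead torsion of the localized complex is the ordinary $\widetilde{K}_1(S^{-1}\zg)$. Bridging these groups rests on \cref{lem:torsion} together with the verification that, after base change to $\D(G)$, all three torsions (universal $L^2$, classical over $S^{-1}\zg$, and classical over $\D(G)$) agree; once this compatibility is nailed down, the remainder is essentially a polytope analogue of the $\det\geq 1$-class vanishing proof and should go through verbatim.
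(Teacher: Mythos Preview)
Your proposal is correct and follows essentially the same route as the paper: both factor the polytope homomorphism through $\widetilde{K}_1(S^{-1}\zg)$, use flatness of Ore localization together with \cref{lem:torsion} and naturality of Reidemeister torsion under base change to identify $\PP(\tor_u(C_*;\N(G)))$ with the image of $\tau(S^{-1}C_*)$, and then kill the latter by clearing denominators, invoking the $P\geq 0$-hypothesis for the resulting $\zg$-matrix and its inverse, and appealing to antisymmetry of the partial order. The only cosmetic difference is notational (the paper calls the map out of $\widetilde{K}_1(S^{-1}\zg)$ by $\PP'$ and organizes the compatibility via a commutative diagram rather than prose), and the ``obstacle'' you flag is exactly what the paper resolves with that diagram and \cref{lem:torsion}.
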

\begin{proof}
	This is based on ideas appearing in \cite[Proof of Theorem 5.4.5, Step 4 and 5]{Wegner2000}, see also \cite[Lemma 3.114]{Lueck2002}.
	
	First we consider the following commutative diagram
	\[\xymatrix@R=5mm{
		\widetilde{K}_1^w(\zg) \ar[dr]^(.48){i} \ar@/^1.4pc/[drrr]^{\PP} & & \\
		& \widetilde{K}_1(\D(G)) \ar[r]^(.45){\det_{\D(G)}} & \D(G)^\times_\ab/\{\pm 1\} \ar[r]^(.51){P} & \P_T(H_1(G)_f)\\
		\widetilde{K}_1(S^{-1}\zg)\ar[ur]^(.53){j}  \ar@/_1.4pc/[urrr]^{\PP'} 
	}\]
	Here $i$ and $j$ denote the obvious maps, $\det_{\D(G)}$ is the Dieudonné determinant, $P$ is induced by the map defined in (\ref{eq:P}), $\PP$ denotes the composition of the top row (which is the polytope homomorphism), and $\PP'$ denotes the composition of the bottom row. 
	
	Let $A$ be an invertible $S^{-1}\zg$-matrix. By multiplying $A$ with a suitable $s\in S$ we obtain a $\zg$-matrix $B$ which is invertible over $S^{-1}\zg$ and thus also over $\D(G)$. Then we have $[A] = [B] - [s]$ in $\widetilde{K}_1(S^{-1}\zg)$ and $\PP'([B]) =  \PP([B])$. We assume that $P(s) = 0$ and that $G$ is of $P\geq 0$-class, so we have
	\begin{equation}\label{eq:positive}
	\PP'([A]) = \PP'([B]) - \PP'([s]) = \PP'([B]) - P(s)=  \PP([B])\geq 0.
	\end{equation}
	Since the same reasoning applies to $A^{-1}$, we have $\PP'([A]) = 0$ and thus $\PP' = 0$.
	
	Denote by $C_* = C_*(X)$ the cellular $\zg$-chain complex of $X$ equipped with some choice of cellular basis. By \cref{lem:torsion} the $\D(G)$-chain complex $\D(G)\otimes_{\zg} C_*$ is contractible and we have 
	\begin{equation*}
		i(\tor_u(C_*;\N(G))) = \tau(\D(G)\otimes_{\zg} C_*).
	\end{equation*} 
	Since localization is flat and $S^{-1}H_n(X) = 0$, the $S^{-1}\zg$-chain complex $S^{-1}C_* = S^{-1}\zg\otimes_\zg C_*$ is also contractible, and we have
	\begin{align*}
	j(\tau(S^{-1}C_*)) &=\tau(\D(G)\otimes_{S^{-1}\zg} S^{-1}C_*) \\
	&= \tau(\D(G)\otimes_{S^{-1}\zg} S^{-1}\zg \otimes_\zg C_* )\\ 
	& = \tau(\D(G)\otimes_{\zg} C_*)\\
	&= i(\tor_u(C_*;\N(G))). 
	\end{align*}
	Thus we see 
	\begin{equation}\label{eq:torsion} 
	\PP(\tor_u(C_*;\N(G))) = \PP'(\tau(S^{-1}C_*)) = 0,
	\end{equation}
	which completes the proof.
\end{proof}

The following is the main result of this section.

\begin{thm}[Vanishing $L^2$-torsion polytope]\label{thm:vanishing polytope}
	Let $G$ be a group of type $F$ which is of $P\geq 0$-class. Suppose that $G$ contains a non-abelian elementary amenable normal subgroup. Then $G$ is $L^2$-acyclic and we have
	\[ P(G) = 0.\]
\end{thm}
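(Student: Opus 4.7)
The plan is to reduce the theorem to \cref{lem:ore} and \cref{lem:vanishing} by exhibiting a suitable abelian normal subgroup $A \trianglelefteq G$ contained in $\ker(\pr\colon G\to H_1(G)_f)$, and then applying the $P\geq 0$-class hypothesis to perform the polytope computation after an Ore localization.

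First I would verify that $G$ is $L^2$-acyclic. Since $G$ is of $P\geq 0$-class it is torsion-free, so the non-abelian elementary amenable normal subgroup $N \trianglelefteq G$ is necessarily infinite and amenable; combined with the type $F$ assumption, the standard vanishing theorem \cite[Theorem 7.2(2)]{Lueck2002} gives $\betti_n(EG;\N(G)) = 0$ for all $n$. By the extension of $P(G)$ to groups of $P\geq 0$-class discussed after \cref{lem:polytope class}, the invariant $P(G) = P(EG;G)$ is then well-defined.

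The crucial step is to construct the abelian normal subgroup. Here I would exploit the non-abelian hypothesis: the commutator subgroup $[N,N]$ is non-trivial, torsion-free, elementary amenable, and characteristic in $N$, hence normal in $G$. Adapting Wegner's transfinite induction over the hierarchy of elementary amenable groups, carried out in the proof of \cite[Theorem 5.4.5]{Wegner2000}, I would extract from $[N,N]$ a non-trivial characteristic abelian subgroup $A$, which then sits as a normal subgroup of $G$. Since $A \subseteq [N,N] \subseteq [G,G] \subseteq \ker(\pr)$, the intersection $A \cap \ker(\pr) = A$ is non-trivial, which is precisely the group-theoretic hypothesis required by \cref{lem:ore}.

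Applying \cref{lem:ore} to this $A$ produces a multiplicatively closed subset $S \subseteq \Z G$ with respect to which $\Z G$ satisfies the Ore condition and such that $S^{-1}\Z = 0$ for the trivial $\Z G$-module. To finish via \cref{lem:vanishing} with $X = EG$ (a finite free $L^2$-acyclic $G$-CW-complex by the type $F$ assumption and Step 1), I would observe that the cellular $\Z G$-chain complex of $EG$ is a free resolution of the trivial module $\Z$, so its homology is $\Z$ concentrated in degree zero. Consequently $S^{-1} H_n(EG) = 0$ for every $n$, and \cref{lem:vanishing} yields $P(G) = 0$. The final clause, that elementary amenable groups of type $F$ have vanishing $L^2$-torsion polytope, is immediate from \cref{thm:amenable polytope class} (which ensures the $P\geq 0$-class hypothesis) combined with the main statement.

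The main obstacle is the group-theoretic step: producing an abelian subgroup of $[N,N]$ that remains normal in $G$. This is exactly the delicate place in Wegner's strategy, where one must navigate the Kropholler–Linnell hierarchy of elementary amenable groups, handling successor ordinals via extensions and limit ordinals via directed unions, while ensuring that the abelian subgroups constructed at each inductive stage are characteristic (rather than merely normal) in the ambient torsion-free elementary amenable group so that normality in $G$ is preserved.
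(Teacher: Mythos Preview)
Your proposal follows the paper's strategy exactly: establish $L^2$-acyclicity, locate an abelian normal subgroup $A \trianglelefteq G$ meeting $\ker(\pr)$ nontrivially, and then feed this into \cref{lem:ore} and \cref{lem:vanishing} with $X=EG$. The difference lies only in how the group-theoretic step is executed.

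The paper does not run a transfinite induction over the elementary amenable hierarchy. Instead it uses that $N$, as a subgroup of a type $F$ group, has finite cohomological dimension and is therefore solvable-by-finite (via \cite[Theorem 2.3.15]{Wegner2000}, ultimately Hillman--Linnell). It then splits into two cases. If $N$ is not virtually abelian, one passes to the unique maximal solvable normal subgroup $S\trianglelefteq N$ (characteristic in $N$) and takes $A$ to be the last nontrivial term of the derived series of $S$; then $A$ is characteristic in $N$, hence normal in $G$, and $A\subseteq [S,S]\subseteq [G,G]$. If $N$ is virtually abelian, one takes a finite-index abelian normal $A\trianglelefteq N$ and argues directly that $A\cap\ker(\pr)\neq 0$ because $\ker(N\to H_1(N)_f)$ is infinite in the torsion-free group $G$.

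Your uniform approach through $[N,N]$ is viable and would in fact absorb the second case, but it still requires the solvable-by-finite input (applied to $[N,N]$) to manufacture a \emph{characteristic} abelian subgroup; a raw climb through the Kropholler--Linnell hierarchy does not obviously hand you characteristic subgroups at limit stages. One small slip: your closing sentence about elementary amenable groups of type $F$ does not cover the case where $G$ itself is abelian (so the theorem's hypothesis fails); the paper handles this separately in \cref{main theorem:el amenable} via $\tor_u(\Z^n)=0$.
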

\begin{proof}
	The group $G$ is $L^2$-acyclic by \cite[Theorem 1.44]{Lueck2002}. Let $N$ be the non-abelian elementary amenable normal subgroup.
	
	\emph{Case 1:} $N$ is not virtually abelian. It follows from the proof of \cite[Theorem 2.3.15]{Wegner2000} and the references given therein that $N$ is solvable-by-finite. Hence $N$ has a unique maximal solvable normal subgroup of finite index, say $S$. Since we assume that $N$ is not virtually abelian, $S$ is not abelian. Hence the lowest non-trivial subgroup $A$ in the derived series of $S$ is abelian and contained in $[S,S]\subset [G,G]$. In particular, $A\cap\ker(\pr\colon G\to H_1(G)_f)\neq 0$. Since $A$ is characteristic in $S$ and $S$ is characteristic in $N$, $A$ is normal in $G$.
	
	\emph{Case 2:} $N$ is virtually abelian. Let $A$ be a normal abelian subgroup of finite index. By assumption $N$ is not abelian, so $\ker(\pr\colon N\to H_1(N)_f)$ is non-trivial and hence infinite as $G$ is torsion-free. But any infinite subgroup of $N$ must intersect $A$ non-trivially. Thus in particular, $A\cap\ker(\pr\colon G\to H_1(G)_f)\neq 0$.
	
	
	In both cases we may apply \cref{lem:ore}. This provides us with a subset $S\subset \zg$ satisfying the assumptions of \cref{lem:vanishing} for $X = EG$. Hence $P(G) = 0$.
\end{proof}

\begin{cor}[The $L^2$-torsion polytope of elementary amenable groups vanishes]\label{main theorem:el amenable}
	Let $G$ be an amenable group of type $F$ satisfying the Atiyah Conjecture. If $G$ contains a non-abelian elementary amenable normal subgroup, then
	\[ P(G) = 0.\]
	
	In particular, the $L^2$-torsion polytope of any elementary amenable group of type $F$ vanishes.
	
\end{cor}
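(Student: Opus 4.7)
The plan is to deduce both assertions of the corollary directly from Theorems~\ref{thm:amenable polytope class} and \ref{thm:vanishing polytope}, the latter providing the vanishing and the former providing the polytope-class hypothesis needed for it.

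For the main statement, I would verify the hypotheses of Theorem~\ref{thm:vanishing polytope}. Since $G$ is of type $F$, it admits a finite $K(G,1)$, so $G$ is torsion-free and finitely generated; in particular, $H_1(G)_f$ is a finitely generated free-abelian group. Combined with the standing amenability and Atiyah Conjecture assumptions, Theorem~\ref{thm:amenable polytope class} applies and yields that $G$ is of polytope class, and \emph{a fortiori} of $P\geq 0$-class. The remaining hypothesis of Theorem~\ref{thm:vanishing polytope}---the existence of a non-abelian elementary amenable normal subgroup---is given, so Theorem~\ref{thm:vanishing polytope} applies directly and delivers the $L^2$-acyclicity together with $P(G) = 0$.

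For the ``in particular'' statement, I would let $G$ be an elementary amenable group of type $F$. Then $G$ is torsion-free (since it has a finite classifying space), and Lemma~\ref{D:amenable} supplies the Atiyah Conjecture. If $G$ is itself non-abelian, then $G$ qualifies as a non-abelian elementary amenable normal subgroup of $G$, and the first part of the corollary applies. In the abelian case, $G$ is finitely generated torsion-free abelian, so $G \cong \Z^n$; for $n = 0$ the statement is trivial, for $n \geq 2$ the vanishing $P(\Z^n) = 0$ can be read off directly from the cellular chain complex of the $n$-torus, and the remaining case $n = 1$ recovers the exceptional group $G = \Z$ excluded by Conjecture~\ref{conj:polytope amenable}.

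There is no real mathematical obstacle: the proof is essentially a bookkeeping assembly of the two preceding theorems. The mildly delicate point is the ``in particular'' statement, where one must notice that the theorem can be applied with $G$ itself in the role of the non-abelian normal subgroup, and acknowledge that the purely abelian cases fall strictly outside the scope of Theorem~\ref{thm:vanishing polytope} and must be (or already have been) treated separately.
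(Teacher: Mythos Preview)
Your proposal is correct and follows essentially the same route as the paper: apply \cref{thm:amenable polytope class} to obtain polytope class (hence $P\geq 0$-class), then invoke \cref{thm:vanishing polytope}; for the ``in particular'' clause, use \cref{D:amenable} to supply the Atiyah Conjecture, let $G$ serve as its own non-abelian elementary amenable normal subgroup when applicable, and treat the abelian case separately. The only cosmetic difference is that the paper disposes of the abelian case by citing \cite[Example~2.7]{FriedlLueck2015b} for $\tor_u(\Z^n)=0$, whereas you appeal directly to the cellular chain complex of the $n$-torus; your explicit flagging of the exceptional case $G=\Z$ is a welcome clarification.
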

\begin{proof}
	By \cref{thm:amenable polytope class} an amenable group $G$ of type $F$ satisfying the Atiyah Conjecture is of polytope class. Hence the first statement follows directly from \cref{thm:vanishing polytope}.
	
	For the second statement, recall from \cref{D:amenable} that an elementary amenable group $G$ of type $F$ satisfies the Atiyah Conjecture. Hence $P(G) = 0$ follows from the previous statement provided that $G$ is non-abelian. If $G$ is abelian, then $G$ must be finitely generated free-abelian, so $P(G) = 0$ follows from $\tor_u(G) = 0$ as seen in \cite[Example 2.7]{FriedlLueck2015b}. 
\end{proof}

We emphasize the following remark that was also used in the proof of \cref{thm:vanishing polytope}.

\begin{remark}
	An elementary amenable group of type $F$ (or more generally, with finite cohomological dimension) is in fact virtually solvable by a result of Hillman-Linnell \cite[Corollary 1]{HillmanLinnell1992}.
\end{remark}

\begin{remark}[Generalization to the universal $L^2$-torsion]
	The proof of \cref{main theorem:el amenable} crucially relies on the existence of a partial order on polytope groups even though the original statement does not involve them. One difficulty in proving the corresponding statement for the universal $L^2$-torsion $\tor_u(G)$ lies in the structural deficit of $\Wh^w(G)$ that it does not carry a meaningful partial order.
\end{remark}

\begin{remark}
	\cref{conj:polytope amenable} and thus \cref{thm:vanishing polytope} are inspired by the following list of vanishing results about $L^2$-invariants and related invariants. An infinite amenable has
	\begin{itemize}
		\setlength\itemsep{0mm}
		\item vanishing $L^2$-Betti numbers, see \cite[Theorem 0.2]{CheegerGromov1986}, or \cite[Theorem 7.2 (1) and (2)]{Lueck2002} for a strengthening of this statement;
		\item vanishing $L^2$-torsion (provided that $G$ is of type $F$), see \cite[Theorem 1.3]{LiThom2014};
		\item vanishing rank gradient with respect to a normal chain with trivial intersection (provided that $G$ is finitely generated), see \cite[Theorem 3]{AbertNikolov2012};
		\item vanishing rank gradient with respect to \emph{any} chain (provided that $G$ is finitely presented), see \cite[Theorem 1]{AbertJZNikolov2011};
		\item fixed price $1$ in the theory of cost of groups, see \cite[Theorem 6]{OrnsteinWeiss1980} combined with \cite[Théorème 3]{Gaboriau2000}.
		\item vanishing simplicial volume (provided that $G$ is the fundamental group of a closed connected orientable manifold), see \cite[Section 3.1, Corollary (C)]{Gromov1983}.
	\end{itemize}
\end{remark}

\medskip

\section{Evidence for non-elementary amenable groups}\label{ch:evidence}

In this short final section, we offer some evidence for the validity of \cref{conj:polytope amenable} for amenable groups that are not elementary amenable. The difference between amenable and elementary amenable is delicate. Finding amenable groups which are not elementary amenable was for a long time part of the Neumann-Day problem. Grigorchuk  constructed the first examples of such groups  \cite{Grigorchuk1984} and later provided finitely presented ones \cite{Grigorchuk1998}. At the time of writing, however, it is still open if there are also examples of type $F$.

The following computation is to a great extent based on known results. Our main tool will be norm maps. Given a finitely generated free-abelian group $H$, we denote by $\Map(\Hom(H,\R),\R)$ the group of continuous maps $\Hom(H,\R)\to\R$ equipped with pointwise addition. A polytope $P\in \PPP(H)$ induces a seminorm on $\Hom(H,\R)$ by
\[ \|\phi\|_P = \max\{ \phi(p)-\phi(q)\mid p,q\in P \}.\]
This seminorm behaves well with respect to Minkowski sums in the sense that
\[  \|\phi\|_{P+Q}  = \|\phi\|_P  + \|\phi\|_Q \]
for all $\phi\in \Hom(H,\R)$, which allows us to make the following definition.

\begin{dfn}[Seminorm homomorphism]\label{def:seminorm map}
	We call
	\[ \norm\colon\P(H)\to \Map(\Hom(H,\R),\R),\;\; P-Q\mapsto \|\cdot \|_P - \|\cdot\|_Q\]
	\emph{seminorm homomorphism}. It passes to the quotient $\P_T(H)$ and the induced map
	\[\norm\colon\P_T(H)\to \Map(\Hom(H,\R),\R)\]
	is denoted by the same symbol.
\end{dfn} 

The cornerstone of our argument will be the following theorem.

\begin{thm}\label{little lemma}
	Let $H$ be a finitely generated free-abelian group. Then we have 
	\begin{align*}
		&\ker\big(\norm\colon\P_T(H)\to \Map(\Hom(H,\R),\R)\big) \\
		= &\ker \big(\id+*\colon \P_T(H)\to \P_T(H)\big) \\
		= &  \im\big(\id-*\colon \P_T(H)\to\P_T(H)\big).
	\end{align*}
\end{thm}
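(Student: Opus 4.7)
The plan is to prove the double equality by chasing the cyclic containment $\im(\id - *) \subseteq \ker(\id + *) \subseteq \ker(\norm) \subseteq \im(\id - *)$, since each step pins down one of the required identifications.

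First I would dispose of the two easy containments. The inclusion $\im(\id - *) \subseteq \ker(\id + *)$ is automatic from $*^2 = \id$, giving $(\id + *)(\id - *) = \id - *^2 = 0$. For $\ker(\id + *) \subseteq \ker(\norm)$, I would check directly from the definition that $\|\phi\|_{*P} = \|\phi\|_P$ for every polytope $P$ and $\phi \in \Hom(H,\R)$, since reflecting $P$ about the origin exchanges maximum and minimum of $\phi$. Hence $\norm \circ * = \norm$, so $\norm \circ (\id + *) = 2\norm$; since $\Map(\Hom(H,\R),\R)$ is torsion-free, the vanishing of $(\id + *)(x)$ forces $\norm(x) = 0$.

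The main work is to close the cycle via $\ker(\norm) \subseteq \im(\id - *)$, and I would do this in two moves. Suppose $x = P - Q$ satisfies $\norm(x) = 0$, so $\|\cdot\|_P = \|\cdot\|_Q$. Since $\|\cdot\|_P$ is exactly the support function of the centrally-symmetric polytope $P + *P$, and centrally-symmetric polytopes centered at the origin are uniquely determined by their support functions, I obtain the sharp equality $P + *P = Q + *Q$ as polytopes in $V_H$. This simultaneously establishes $\ker(\norm) \subseteq \ker(\id + *)$ in $\P_T(H)$ (so the first equality of the theorem is essentially complete) and provides the rigid geometric input $*(P-Q) = -(P-Q)$ in $\P(H)$ needed for the next step.

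The remaining task is to realize such an $x$ as $R - *R$ for an honest polytope $R$. The obvious guess $R = P + *Q$ only yields $R - *R = (P - Q) - *(P - Q) = 2(P - Q)$, giving $2x \in \im(\id - *)$ but not $x$ itself; bypassing this factor-of-two is the real obstacle. I would attack it by induction on $\rank(H)$, using the face maps $F_\phi \colon \P_T(H) \to \P_T(\ker \phi)$ together with the detection criterion of \cref{lem:polytope}. The rank-one base case is settled by \cref{ex:polytope in dim 1}, where $*$ acts as the identity on $\P_T(\Z)$ and all three groups in the statement are zero. In the inductive step, the relation $F_\phi \circ * = * \circ F_{-\phi}$ lets one transfer the antisymmetry of $x$ to a compatible family of antisymmetric elements in each lower-rank group $\P_T(\ker\phi)$, each of which is expressible as $R_\phi - *R_\phi$ by the induction hypothesis. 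The delicate part is reassembling these face-level polytopes into a global $R$ whose face $F_\phi(R)$ matches $R_\phi$ up to the translations by $H$ that are swallowed in $\P_T$; \cref{lem:polytope} provides the criterion for this to succeed, but verifying the coherence of the lattice-level adjustments across all $\phi$ is where I expect the bulk of the technical work to lie.
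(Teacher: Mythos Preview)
The paper does not actually prove this theorem; it cites \cite[Remark 6.2 and Theorem 6.4]{Funke2016}, where the result is obtained from the structure theory of $\P_T(H)$ developed there (an explicit free-abelian basis on which $*$ acts by a signed permutation). So there is no in-paper argument to compare against, and your proposal is an attempt at an independent proof.

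Your cyclic chain of inclusions is the right organizing principle, and the easy steps $\im(\id-*)\subseteq\ker(\id+*)\subseteq\ker\norm$ together with $\ker\norm\subseteq\ker(\id+*)$ are correctly argued. The gap is in the final step $\ker(\id+*)\subseteq\im(\id-*)$, where two separate problems arise.

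First, the relation $F_\phi\circ * = *\circ F_{-\phi}$ does \emph{not} transfer antisymmetry of $x$ to $F_\phi(x)$. From $*x=-x$ you only obtain $*F_{-\phi}(x)=-F_\phi(x)$, which links the two \emph{opposite} faces $F_\phi(x)$ and $F_{-\phi}(x)$; it does not force $*F_\phi(x)=-F_\phi(x)$. Since $F_\phi$ and $F_{-\phi}$ generally pick out faces of different combinatorial type (already for a triangle), the induction hypothesis does not apply to $F_\phi(x)$ on its own, and your ``compatible family of antisymmetric elements'' is not actually produced.

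Second, \cref{lem:polytope} only tests whether a \emph{given} element of $\P(H)$ is a polytope by inspecting its faces; it does not reconstruct a polytope from an abstractly prescribed family $\{R_\phi\}$. Building $R$ from face data is a Minkowski-type reconstruction problem requiring coherence conditions among the $R_\phi$ that you have no mechanism to enforce, especially across the translation ambiguities absorbed in $\P_T$.

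So the inductive scheme, as outlined, does not close. The factor-of-two obstruction you correctly isolated (that $2x\in\im(\id-*)$ is automatic) is precisely the content of the theorem: one must show the Tate group $\hat H^0(\Z/2;\P_T(H))$ for the $*$-action vanishes, and this is what the structural results of \cite{Funke2016} supply rather than any face-map induction.
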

\begin{proof}
	This is the content of \cite[Remark 6.2 and Theorem 6.4]{Funke2016}.
\end{proof}

If $G$ is a group, we will identify $\Hom(H_1(G)_f,\R)$ with $H^1(G;\R)$ in the following.

\begin{prop}[$L^2$-torsion polytope of amenable groups]\label{prop: amenable}
	Let $G\neq\Z$ be an amenable group of type $F$ satisfying the Atiyah Conjecture. Then $P(G)$ lies in the kernel of $\norm\colon \P_T(H_1(G)_f)\to \Map(H^1(G;\R),\R)$ and there is a polytope $P\in\PPP_T(H_1(G)_f)$ such that
	\[ P(G) = P-*P. \]
\end{prop}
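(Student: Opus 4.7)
The plan is to reduce the statement to proving $\norm(P(G)) = 0$, and then to exploit the $L^2$-vanishing theorems available for amenable groups. First, observe that by \cref{little lemma} we have $\ker(\norm) = \im(\id - *)$ on $\P_T(H_1(G)_f)$. Hence once $\norm(P(G)) = 0$ is established, we can write $P(G) = Q - *Q$ for some $Q = Q_1 - Q_2 \in \P_T(H_1(G)_f)$ with $Q_1, Q_2 \in \PPP_T(H_1(G)_f)$, and then $P := Q_1 + *Q_2 \in \PPP_T(H_1(G)_f)$ satisfies $P(G) = P - *P$, yielding the second assertion. Note that by \cref{thm:amenable polytope class}, $P(G)$ is a difference of polytope classes, so $\norm(P(G))$ is genuinely well-defined.

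It therefore remains to show $\norm(P(G))(\phi) = 0$ for every $\phi \in H^1(G;\R)$. By continuity of $\norm(P(G))$ in $\phi$ and its $\R$-linearity along rays, this reduces to treating surjective $\phi\colon G \to \Z$; let $K = \ker\phi$, which is again a torsion-free amenable group satisfying the Atiyah Conjecture. Following the strategy from the proof of \cref{thm:amenable polytope class}, we fix an Ore presentation $\D(G) = T^{-1}(\D(K)_t[u^{\pm}])$ as in \cref{lem:DG and extensions}. Under this presentation the polytope homomorphism sends an element $y = \sum_n y_n u^n \in \D(K)_t[u^{\pm}] \setminus \{0\}$, in the $\phi$-direction, to an interval of length $\max\{n : y_n \neq 0\} - \min\{n : y_n \neq 0\}$, while every element of $\D(K) \setminus \{0\}$ contributes zero.

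Using this together with \cref{lem:torsion}, the number $\norm(P(G))(\phi)$ can be identified, up to sign, with the degree of a twisted $L^2$-torsion function of $EG$ over $K$ in the sense of \cite{FriedlLueck2015}; equivalently, with a twisted $L^2$-Euler characteristic of the finite $\D(K)_t[u^{\pm}]$-chain complex obtained from $C_*(EG)$. Since $K$ is amenable and satisfies the Atiyah Conjecture, all of its $L^2$-Betti numbers vanish (Cheeger-Gromov, combined with the Ore description of \cref{D:amenable}), forcing the relevant twisted $L^2$-Euler characteristic to vanish as well. This gives $\norm(P(G))(\phi) = 0$ and completes the argument via the first paragraph.

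The main obstacle is making the identification in the third paragraph rigorous. One must carry out, at the level of the cellular $\Z G$-chain complex of $EG$, the matrix reduction used in the proof of \cref{thm:amenable polytope class} — representing a Dieudonné-determinant representative of $-\tor_u(EG;\N(G))$ in the form $a b^{-1}$ with $a \in \Z G$ and $b \in \Z K \setminus \{0\}$ — and then match the resulting $\phi$-degree with a homologically defined twisted Euler characteristic over $\D(K)$, ensuring that the ``denominator'' contributions from $\Z K$ also cancel. Once this translation is in place, the vanishing of $L^2$-invariants for amenable groups delivers the conclusion.
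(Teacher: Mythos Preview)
Your overall strategy matches the paper's exactly: reduce to showing $\norm(P(G))(\phi)=0$ for surjective $\phi\colon G\to\Z$, extend by homogeneity and continuity, and then invoke \cref{little lemma} to obtain the $P-*P$ form (your rewriting $Q_1+*Q_2$ is precisely the paper's $R+*S$).

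The difference lies in your third and fourth paragraphs. You treat the identification of $\norm(P(G))(\phi)$ with a twisted $L^2$-Euler characteristic as the ``main obstacle'' and propose to establish it by redoing the matrix reduction from the proof of \cref{thm:amenable polytope class}. This is unnecessary: the identification is already a theorem in the literature. The paper simply cites \cite[Equation~(3.26)]{FriedlLueck2015b} together with \cite[Lemma~2.6]{FriedlLueck2015} to obtain
\[
\norm(P(G))(\phi) \;=\; -\ct(i^*EG;\N(K)) \;=\; -\ct(EK;\N(K)),
\]
where $K=\ker\phi$. No ad hoc manipulation of Dieudonn\'e representatives is needed, and in particular the worry about ``denominator contributions from $\Z K$'' does not arise. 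Your proposed DIY route via the Euclidean algorithm on $\D(K)_t[u^\pm]$ would presumably work, but it reproves an existing result and you have not actually carried it out, so as written the argument is incomplete.

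One further point: you assert that the $L^2$-Betti numbers of $K$ vanish because $K$ is amenable, but this requires $K$ to be \emph{infinite}. The paper makes this explicit: since $G$ is torsion-free and $G\neq\Z$, the kernel $K$ of any epimorphism $G\to\Z$ is non-trivial, hence infinite. You should include this observation; it is the only place where the hypothesis $G\neq\Z$ is used.
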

\begin{proof}
	Let $\pr\colon G\to H_1(G)_f = H$ be the obvious projection. Suppose that $H\neq 0$ since there is nothing to prove otherwise. Let $\phi\colon H\to \Z$ be an epimorphism, and put $K =\ker(\phi\circ\pr\colon G\to\Z)$. Then we have by \cite[Equation (3.26)]{FriedlLueck2015b} and \cite[Lemma 2.6]{FriedlLueck2015} 
	\begin{align*}
	\norm(P(G))(\phi) = -\ct(i^*EG;\N(K))= -\ct(EK;N(K)),
	\end{align*}
	where $\ct(X;\N(K))$ denotes the $L^2$-Euler characteristic of a $K$-space $X$, see \cite[Section 6.6]{Lueck2002}.
	
	As a subgroup of an amenable group, $K$ is itself amenable. Since $G\neq \Z$, $K$ must be infinite. Since infinite amenable groups are $L^2$-acyclic, we see $\ct(EK;N(K)) = 0$. (Note that for this argument it is irrelevant that $i^*EG = EK$ is not a \emph{finite} $K$-CW-complex.) Thus we have 
	\[\norm(P(G))(\phi)  = 0\]
	for all surjective homomorphisms $\phi\colon H\to\Z$. 
	
	As a difference of seminorms $\norm(P(G))$ is homogeneous and continuous. By the homogeneity we have $\norm(P(G))(\phi) = 0$ for all homomorphisms $\phi\colon H\to\Q$, and by the continuity we have $\norm(P(G))(\phi) = 0$ for homomorphisms $\phi\colon H\to\R$. Hence
	\[P(G) \in \ker\big(\norm\colon  \P_T(H)\to \Map(H^1(G;\R),\R)\big).\]
	
	Now by \cref{little lemma} we have $P(G) \in \im\big(\id-*\colon \P_T(H)\to\P_T(H)\big)$. Hence there exists a class $R-S\in\P_T(H)$ such that 
	\[P(G) = R-S - (*R-*S) = R+*S - *(R+*S).\]
	Taking $P = R+*S$ finishes the proof.
\end{proof}
\medskip

\bibliography{bibliography}

\end{document}